\documentclass[reqno, 12pt]{amsart}
\pdfoutput=1
\makeatletter
\let\origsection=\section 
\def\section{\@ifstar{\origsection*}{\mysection}} 
\def\mysection{\@startsection{section}{1}\z@{.7\linespacing\@plus\linespacing}{.5\linespacing}{\normalfont\scshape\centering\S}}
\makeatother        
\usepackage{amsmath,amssymb,amsthm}  
\usepackage{mathabx}\changenotsign  
\usepackage{mathrsfs} 
\usepackage{dsfont}

\usepackage{xcolor}  	
\usepackage[backref]{hyperref}
\hypersetup{
	colorlinks,
    linkcolor={red!60!black},
    citecolor={green!60!black},
    urlcolor={blue!60!black},
}

\usepackage[open,openlevel=2,atend]{bookmark}

\usepackage[abbrev,msc-links,backrefs]{amsrefs} 
\usepackage{doi}

\renewcommand{\PrintDOI}[1]{\doi{#1}}

\usepackage[T1]{fontenc}
\usepackage{lmodern}
\usepackage[babel]{microtype}
\usepackage[english]{babel}

\numberwithin{equation}{section}

\linespread{1.19}
\usepackage{geometry}
\geometry{left=25mm,right=25mm, top=22.5mm, bottom=22.5mm}

\usepackage{picins}
\usepackage{graphicx}              
\usepackage{caption}
\usepackage{subcaption}
\usepackage{graphics}

\usepackage{enumitem}

\def\alabel{\upshape({\itshape \alph*\,})}

\let\polishlcross=\l
\def\l{\ifmmode\ell\else\polishlcross\fi}

\def\qand{\quad\text{and}\quad}
\def\qqand{\qquad\text{and}\qquad}

\let\setminus=\smallsetminus

\usepackage{accents}
\newcommand{\seq}[1]{\accentset{\rightharpoonup}{#1}}

\makeatletter
\def\moverlay{\mathpalette\mov@rlay}
\def\mov@rlay#1#2{\leavevmode\vtop{
   \baselineskip\z@skip \lineskiplimit-\maxdimen
   \ialign{\hfil$\m@th#1##$\hfil\cr#2\crcr}}}
\newcommand{\charfusion}[3][\mathord]{
    #1{\ifx#1\mathop\vphantom{#2}\fi
        \mathpalette\mov@rlay{#2\cr#3}
      }
    \ifx#1\mathop\expandafter\displaylimits\fi}
\makeatother

\newcommand{\dcup}{\charfusion[\mathbin]{\cup}{\cdot}}
\newcommand{\bigdcup}{\charfusion[\mathop]{\bigcup}{\cdot}}

\let\eps=\varepsilon
\let\theta=\vartheta
\let\rho=\varrho
\let\phi=\varphi

\def\NN{\mathds N}

\def\cF{{\mathcal F}}

\def\cS{{\mathcal S}}

\def\cK{{\mathcal K}}

\def\cW{{\mathcal W}}

\theoremstyle{plain}
\newtheorem{thm}{Theorem}[section]
\newtheorem{fact}[thm]{Fact}
\newtheorem{prop}[thm]{Proposition}

\newtheorem{cor}[thm]{Corollary}
\newtheorem{lemma}[thm]{Lemma}

\theoremstyle{definition}

\newtheorem{dfn}[thm]{Definition}

\DeclareFontFamily{U}  {MnSymbolC}{}
\DeclareSymbolFont{MnSyC}         {U}  {MnSymbolC}{m}{n}
\DeclareFontShape{U}{MnSymbolC}{m}{n}{
    <-6>  MnSymbolC5
   <6-7>  MnSymbolC6
   <7-8>  MnSymbolC7
   <8-9>  MnSymbolC8
   <9-10> MnSymbolC9
  <10-12> MnSymbolC10
  <12->   MnSymbolC12}{}
\DeclareMathSymbol{\powerset}{\mathord}{MnSyC}{180}
\DeclareMathSymbol{\suptriangle}{\mathord}{MnSyC}{72}
\DeclareMathSymbol{\muptriangle}{\mathord}{MnSyC}{80}
\DeclareMathSymbol{\luptriangle}{\mathord}{MnSyC}{84}

\def\triag{{\muptriangle}}

\def\tri{\triangle}

\def\t{t}

\begin{document}
\title[Forcing quasirandomness with triangles]{Forcing quasirandomness with triangles}
\thanks{Research of both author was supported by ERC Consolidator Grant 724903.}

\author[Christian Reiher]{Christian Reiher}
\author[Mathias Schacht]{Mathias Schacht}
\address{Fachbereich Mathematik, Universit\"at Hamburg, Hamburg, Germany}
\email{Christian.Reiher@uni-hamburg.de}
\email{schacht@math.uni-hamburg.de}

\keywords{quasirandom graphs, forcing pairs}
\subjclass[2010]{05C80}
\begin{abstract}
	We study \emph{forcing pairs} for \emph{quasirandom graphs}.
	Chung, Graham, and Wilson initiated the study of families~$\cF$
	of graphs with the property that if a large graph~$G$ has 
	approximately homomorphism density $p^{e(F)}$ for some fixed $p\in(0,1]$ for every $F\in \cF$,
	then~$G$ is quasirandom with density~$p$. Such families $\cF$ are said to be \emph{forcing}.
	Several forcing families were found over the last three decades and characterising all 
	bipartite graphs $F$ such that~$(K_2,F)$ is a forcing pair is 
	a well-known open problem in the area of quasirandom graphs, which is closely 
	related to Sidorenko's conjecture. In fact, most of 
	the known forcing families involve bipartite graphs only.
	
	We consider forcing pairs containing the triangle~$K_3$. In particular, we show that  
	if~$(K_2,F)$ is a forcing pair, then 
	so is $(K_3,F^\triag)$, where $F^\triag$ is obtained from~$F$
	by replacing every edge of $F$ by a triangle (each of which introduces a new vertex).
	For the proof we first show that~$(K_3,C_4^\triag)$ is a forcing pair, which 
	strengthens related results of Simonovits and S\'os
	and of Conlon et al.
\end{abstract} 

\maketitle

\section{Introduction}\label{sec:intro}
The systematic study of quasirandom graphs was initiated by Thomason~\cites{Th87a,Th87b} 
and Chung, Graham, and Wilson~\cite{CGW89} and over the last 30 years many generalisations 
and extensions to directed graphs~\cite{Gr13}, tournaments~\cite{CG91}, 
hypergraphs~\cites{ACHPS,Ch90,Ch12,CG90,CHPS,HaTh89,KRS02,LM15,To17}, set systems~\cite{CG91b}, 
permutations~\cite{Co04}, groups~\cite{Gow08}, subsets of cyclic groups and finite fields~\cites{Wi72,CG92}, and 
sparse graphs~\cites{CG02,CFZ14,KR03a} were established by several researchers (see, e.g., the surveys~\cites{KR03,KS06} for a more detailed discussion). Roughly speaking, 
a given discrete structure is quasirandom if it shares important properties with a 
``truly random'' structure of the same size. In the context of graphs this is made precise by 
mimicking the uniform edge distribution of the random graph~$G(n,p)$.
\begin{dfn} For $\eps>0$ and $p\in(0,1]$
	we say a graph $G=(V,E)$ is \emph{$(\eps,p)$-quasirandom}, if for all subsets $X$, $Y\subseteq V$
	we have 
	\[
		\big|e_G(X,Y)-p\,|X|\,|Y|\big|\leq \eps |V|^2\,,
	\]
	where edges contained in the intersection $X\cap Y$ are counted twice in $e_G(X,Y)$.
\end{dfn}
In the context of quasirandom graphs we often consider sequences of graphs $\seq{G}=(G_n)_{n\in\NN}$
with $|V(G_n)|\to\infty$. Then we may say that the sequence \emph{$\seq{G}$ is 
$p$-quasirandom} if for every~$\eps>0$ all but finitely many members of the sequence 
are $(\eps,p)$-quasirandom. For a simpler discussion, we sometimes 
say that a graph~$G$ is $p$-quasirandom without any reference to the sequence, by which 
we mean that some sufficiently large graph~$G$ is $(\eps,p)$-quasirandom for some small 
unspecified~$\eps>0$. 
If the density~$p$ is of no particular importance, then we may just say $G$ is quasirandom.

A large part of the theory of quasirandom graphs concerns equivalent 
characterisations of~\mbox{$p$-quasirandom} graph sequences. Early results 
in that direction implicitly appeared in~\cites{Al86,AlCh88,FRW88,Ro86} and 
Chung, Graham, and Wilson~\cite{CGW89} gave six alternative characterisation. 
Since then many more such characterisations were found (see,
e.g.,~\cites{CG92a,CHPS,HuLe12,JaSo15,Sh08,SY10,SY12,SiSo91,SiSo97,SiSo03,SkTh04,Y10}).

Here we focus on characterisations that rely on the densities of graph homomorphisms of 
given graphs~$F$ into large graphs~$G$. We denote by $\hom(F,G)$ the number of graph homomorphisms 
from $F$ into~$G$ and the \emph{homomorphism density  $\t(F,G)$} is defined by 
\[
	\t(F,G)=\frac{\hom(F,G)}{|V(G)|^{|V(F)|}}\,.
\]
Let us recall that a pair of graphs $(F_1,F_2)$ is said to be {\it forcing} 
if for every $p\in(0,1]$ and $\eps>0$ there is some $\delta>0$ such that the following holds: 
if a graph $G$ satisfies 
\begin{equation}\label{eq:def:forcing}
	\t(F_1, G)\geq (1-\delta)p^{e(F_1)}
	\qqand
	\t(F_2, G)\leq (1+\delta)p^{e(F_2)}
\end{equation}
then it is $(\eps,p)$-quasirandom.
This notion goes back to~\cite{CGW89} and has frequently been discussed in the literature. 
The most classical example of such a family is the pair $(K_2, C_4)$. 
The statement that the pair $(K_2, F)$ is forcing for every bipartite graph $F$ that 
is not a forest, called the {\it forcing conjecture}, can be traced back to 
Skokan and Thoma~\cite{SkTh04} (see also~\cite{CFS10}). 
It has been the subject of intensive study that led to its verification in various cases. 
For the most recent contributions to the forcing conjecture and the closely related conjectures of 
Erd\H os and Simonovits~\cite{Si84} and of Sidorenko~\cite{Si92} we refer to~\cites{CFS10,CKLL,CoLe17,Ha10,Korea,LSz,Szegedy}.

Until  recently all known forcing pairs contained bipartite graphs only.
In fact, already Chung, Graham, and Wilson~\cite{CGW89} noted that $(K_2,K_3)$ (and also $(K_{1,2},K_3)$)
is not a forcing pair, by giving an example of $n$-vertex graphs $G$ with all vertices having degree close 
to~$n/2$ and with $t(K_3,G)\approx 1/8$, but containing independent sets and cliques of size~$\lfloor n/4\rfloor$.
However, it was shown by Simonovits and S\'os~\cite{SiSo97} that such a situation can be avoided by appealing 
to the \emph{hereditary} nature of quasirandom graphs, i.e., if $G=(V,E)$ is $p$-quasirandom, then the induced 
subgraphs~$G[U]$ are $p$-quasirandom for linear sized subsets~$U\subseteq V$. Simonovits and S\'os 
then showed that requiring 
\begin{equation}\label{eq:lforcing}
	\t(F, G[U])=\Big(1\pm \delta \tfrac{|V|}{|U|}\Big)p^{e(F)}
\end{equation}
for a given graph $F$ with at least one edge and for all $U\subseteq V$
forces $G$ to be $p$-quasirandom. Recently, Conlon, H\`an, Person, and Schacht~\cite{CHPS} 
(see also~\cite{HPS11}) observed that condition~\eqref{eq:lforcing} 
gives rise to a forcing pair $(F,M_F)$
for an appropriate graph $M_F$ depending on~$F$.

\parpic[r]{\includegraphics[width=4cm]{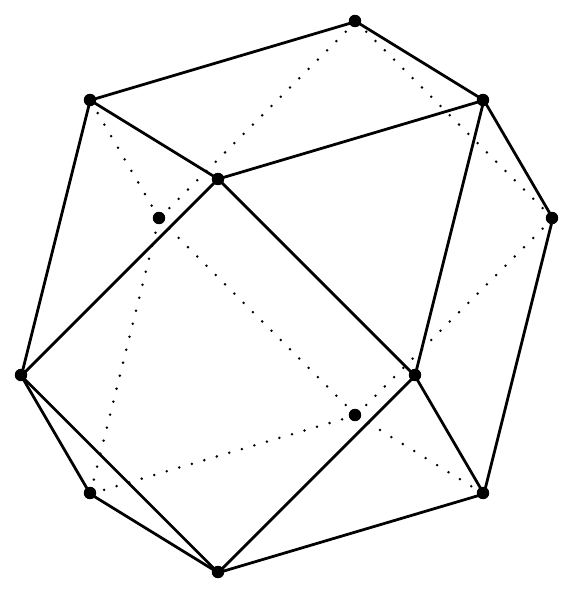}}
We study forcing pairs involving triangles. For that case it was shown in~\cite{CHPS}
that the pair $(K_3, M)$ is forcing, where $M$ denotes the line graph of the 
$3$-dimensional Boolean cube, depicted on the right. 
The idea behind the proof is roughly as follows: Three successive applications of the 
Cauchy--Schwarz inequality yield $\t(M, G)\ge \t(K_3, G)^8$ for any graph~$G=(V,E)$. 
On the other hand, the assumption~\eqref{eq:def:forcing} for $(K_3,M)$ tells us $t(K_3, G)\gtrsim p^3$
and $\t(M, G)\lesssim p^{24}$ for some real $p\in(0,1]$. 
Consequently, an approximate equality must hold in each of these three steps, and it may be argued 
that this is in turn only possible if for all subsets $A$, $B$, $C\subseteq V$ we have 
\[
	\tri(A, B, C)\approx p^3\,|A|\,|B|\,|C|\,,
\] 
where $\tri(A, B, C)$ denotes the number of triangles with a vertex in $A$, 
a vertex in~$B$, and a vertex in~$C$. This yields~\eqref{eq:lforcing}
for $F=K_3$ and the Simonovits--S\'os theorem implies that $G$ is $p$-quasirandom.

\parpic[l]{\includegraphics[width=4cm]{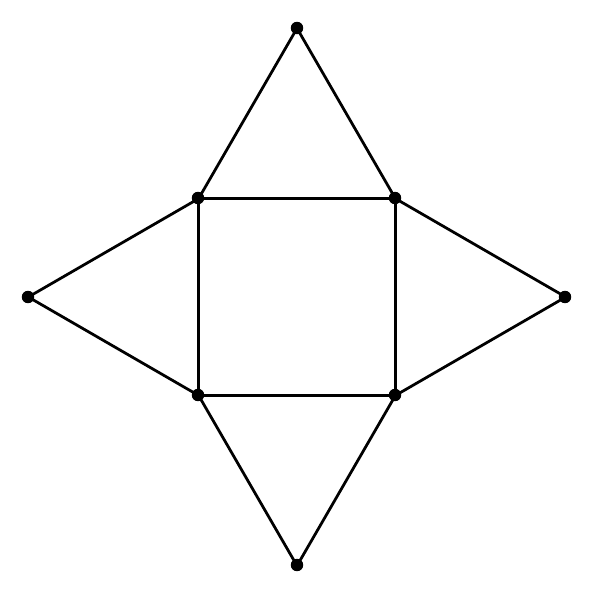}} Our main result shows that 
the same effect as above can be achieved with two applications of the 
Cauchy--Schwarz inequality only. This implies that the pair $(K_3, C_4^\triag)$ is forcing, 
where $C_4^\triag$ is obtained from the $4$-cycle $C_4$ where every edge is replaced by a triangle 
(each of which introduces a new vertex), 
i.e., the graph shown on the left. 
As we shall explain in more detail below, we have $\t(C_4^\triag, G)\ge \t(K_3, G)^4$ for all graphs $G$, 
and if approximate equality holds for some graph~$G$, then it satisfies the assumption of 
the following theorem, which weakens the assumption of the Simonovits--S\'os theorem in the triangle case. 
\begin{thm}\label{thm:2sets}
For every $p\in(0,1]$ and $\eps>0$ there is an $\eta>0$ such that any graph ${G=(V,E)}$ 
satisfying 
\begin{equation}\label{eq:2sets}
	\tri(A, B, V)=p^3\,|A|\,|B|\,|V|\pm \eta\,|V|^3
\end{equation}
for all $A, B\subseteq V$
is $(\eps,p)$-quasirandom.
\end{thm}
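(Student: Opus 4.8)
The plan is to apply the hypothesis~\eqref{eq:2sets} twice: once to reduce $(\eps,p)$-quasirandomness to a codegree-regularity statement, and once --- applied to the neighbourhoods $N(v)$ --- to control the second moment of the codegrees. The one genuinely hard input will be that almost all vertices of $G$ have degree $\approx p\,|V|$; I expect this to be where the real work lies.

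Write $n=|V|$, $d_G(v)=|N(v)|$, and $d_G(a,b)=|N(a)\cap N(b)|$. First I would reduce the goal. By inclusion--exclusion it suffices to prove $e_G(X,X)=p|X|^2\pm o(n^2)$ for every $X\subseteq V$, and from the identity
\[
	\tri(X,X,V)=\sum_{x,y\in X}\mathbf 1[xy\in E]\,d_G(x,y)=p^2n\cdot e_G(X,X)+\sum_{x,y\in X}\mathbf 1[xy\in E]\bigl(d_G(x,y)-p^2n\bigr)
\]
together with $\tri(X,X,V)=p^3|X|^2n\pm\eta n^3$ from~\eqref{eq:2sets}, this follows as soon as
\begin{equation}\label{eq:codeg}
	\sum_{xy\in E}\bigl|d_G(x,y)-p^2n\bigr|=o(n^3)\,.
\end{equation}

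To obtain~\eqref{eq:codeg} I would control the first two moments of $d_G(\cdot,\cdot)$ over the edges. The first moment is the triangle count, $\sum_{xy\in E}d_G(x,y)=\tri(V,V,V)=p^3n^3\pm\eta n^3$. For the second moment, observe that, with $K_4^-$ the graph consisting of two triangles sharing an edge,
\[
	\hom(K_4^-,G)=\sum_{xy\in E}d_G(x,y)^2=\sum_{v\in V}\tri\bigl(N(v),N(v),V\bigr)\,;
\]
applying~\eqref{eq:2sets} with $A=B=N(v)$ to the (all but $o(n)$) vertices $v$ with $d_G(v)\ge\eta^{1/3}n$ and estimating the contribution of the remaining vertices trivially yields $\hom(K_4^-,G)=p^3n\sum_{v}d_G(v)^2+o(n^4)$. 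Hence, \emph{if} almost all degrees equal $p\,n+o(n)$ --- equivalently $\sum_v d_G(v)^2\le p^2n^3+o(n^3)$, which also gives $e(G)=\tfrac12 pn^2+o(n^2)$ --- then $\hom(K_4^-,G)\le p^5n^4+o(n^4)$, and a one-line Cauchy--Schwarz/variance estimate from the first-moment identity and the edge count gives $\sum_{xy\in E}(d_G(x,y)-p^2n)^2=o(n^4)$, whence~\eqref{eq:codeg}.

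So the entire argument comes down to the near-regularity of the degrees, and this is the step I expect to be the main obstacle: \eqref{eq:2sets} is bilinear in the edge set and does not transparently constrain the degree sequence. The elementary consequences of~\eqref{eq:2sets} only give the lower bound $d_G(v)\gtrsim p^{3/2}n$ for almost all $v$ (from $e(G[N(v)])\approx\tfrac12 p^3n^2$), together with --- via the bound $c\le p^3+o(1)$ for the size $c\,n$ of any linear-sized clique, itself a quick consequence of~\eqref{eq:2sets} --- the exclusion of the most lop-sided configurations, but not of all of them. A clean way to phrase what is left is as a rigidity statement about graphons: passing (by compactness) to a subsequential limit $w$ of a hypothetical sequence of counterexamples, \eqref{eq:2sets} becomes the pointwise identity $w(x,y)\int_0^1 w(x,z)w(y,z)\,dz=p^3$ for a.e.\ $(x,y)$, and one must conclude that $w$ is a.e.\ constant (necessarily $\equiv p$). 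Here $w\ge p^3$ pointwise comes for free; and if the associated kernel operator $T_w$ were in addition positive semidefinite with $\mathbf 1$ as a Perron eigenvector (i.e.\ if $w$ were regular), then the operator identity $w\circ T_w^2=p^3(\mathbf 1\otimes\mathbf 1)$ evaluated on any eigenfunction $\phi\perp\mathbf 1$ of $T_w$ forces, by a term-by-term inspection of the spectral expansion (all of whose terms are nonnegative by positive semidefiniteness), the corresponding eigenvalue to vanish, so $T_w$ has rank one and $w\equiv p$. Thus the crux is to dispose of these two regularity assumptions --- i.e.\ to actually prove degree-regularity --- and this is presumably where the full strength of the quantifier ``for all $A,B$'' in~\eqref{eq:2sets}, as opposed to any fixed finite subfamily of its instances, must be used; once it is in hand, the remaining reductions above are routine.
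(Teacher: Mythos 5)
Your reduction is sound: \eqref{eq:2sets} with $X=A=B$, together with the identity expressing $\tri(X,X,V)$ as a sum of codegrees over edges of $G[X]$, does show that $(\eps,p)$-quasirandomness would follow from the codegree concentration $\sum_{xy\in E}\bigl||N(x)\cap N(y)|-p^2 n\bigr|=o(n^3)$; and your first- and second-moment identities (the $K_4^-$ count and its rewriting as $\sum_v\tri(N(v),N(v),V)$) are correct. You have also correctly located the bottleneck: to make the variance calculation close you need $\sum_v \deg(v)^2=p^2n^3+o(n^3)$, i.e.\ near-regularity of the degrees, and~\eqref{eq:2sets} does not yield this transparently. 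But the proposal stops exactly there. The spectral argument you sketch for the graphon limit is explicitly conditional --- it assumes the kernel operator is positive semidefinite with~$\mathbf 1$ as a Perron eigenvector, and the latter assumption is precisely the degree regularity you are trying to prove, so the argument is circular. This is a genuine, self-acknowledged gap, and the remaining ``routine'' steps do not constitute a proof.

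The paper never proves degree regularity as a separate step. It applies Szemer\'edi's regularity lemma (Theorem~\ref{lem:RL}), uses the triangle counting lemma (Lemma~\ref{lem:CL}) to translate~\eqref{eq:2sets} into the constraint $d_{ij}\sum_{k}d_{ik}d_{jk}=(p^3\pm 9\delta)t$ on the density matrix of the reduced graph, and then invokes Lemma~\ref{lem:2sets-reduced}. The proof of that lemma is a short, self-contained extremal argument: choose indices $x,y,z$ maximising $a-b=d_{xy}-d_{xz}$, extract a fourth index $w$ satisfying~\eqref{eq:aabb} via an averaging step, play the two instances of~\eqref{eq:dijk} at $(w,y)$ and $(w,z)$ against each other to bound $|a\overline a-b\overline b|$, and conclude $a-b$ is small, so all $d_{ij}$ lie in a short interval which~\eqref{eq:dijk} then forces to contain~$p$. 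Near-regularity of $G$ falls out as a \emph{consequence} of this lemma, not as an input to it. To complete your approach you would need to prove Lemma~\ref{lem:2sets-reduced}, or equivalently its graphon analogue Theorem~\ref{thm:2sets-ana}, unconditionally --- and that is where all the work is.
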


The following corollary renders the aforementioned connection between condition~\eqref{eq:2sets} 
and~$\t(C_4^\triag, G)\approx \t(K_3, G)^4$ and strengthens the result of Conlon et al. 
that $(K_3,M)$ is a forcing pair.
\begin{cor}\label{thm:K3Nforcing}
The pair $(K_3, C_4^{\triag})$ is forcing.
\end{cor}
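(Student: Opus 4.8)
The plan is to derive Corollary~\ref{thm:K3Nforcing} from Theorem~\ref{thm:2sets} by encoding the triangles of a graph in a weighted adjacency matrix. Given $G=(V,E)$ with $|V|=n$, put $\tau(x,y)=\mathbf 1_E(xy)\,|N_G(x)\cap N_G(y)|$ --- the number of triangles of $G$ on the pair $\{x,y\}$ --- and let $T=\bigl(\tau(x,y)\bigr)_{x,y\in V}$, a nonnegative symmetric matrix. Three identities drive the argument: $\hom(K_3,G)=\mathbf 1^{\top}T\mathbf 1$; $\hom(C_4^{\triag},G)=\operatorname{tr}(T^{4})$, because a homomorphism of $C_4^{\triag}$ into $G$ is a homomorphism $c_i\mapsto x_i$ of the underlying $4$-cycle together with a common neighbour of $x_i$ and $x_{i+1}$ for each cycle edge, so that each of the four edges of the cycle contributes precisely the factor $\tau(x_i,x_{i+1})$; and $\tri(A,B,V)=\mathbf 1_A^{\top}T\mathbf 1_B$ for all $A,B\subseteq V$. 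Since $\|\mathbf 1_A\|,\|\mathbf 1_B\|\le n^{1/2}$, the last identity reduces the corollary to the following: for the $\eta=\eta(p,\eps)>0$ supplied by Theorem~\ref{thm:2sets}, if $\delta>0$ is small enough then $\t(K_3,G)\ge(1-\delta)p^{3}$ and $\t(C_4^{\triag},G)\le(1+\delta)p^{12}$ force $\|T-p^{3}nJ\|_{\mathrm{op}}\le\eta n^{2}$, where $J$ is the all-ones matrix; this yields \eqref{eq:2sets} and hence $(\eps,p)$-quasirandomness.

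Next I would record the double Cauchy--Schwarz inequality behind $\t(C_4^{\triag},G)\ge\t(K_3,G)^{4}$: writing $w(x)=\sum_y\tau(x,y)$ for the $x$-th row sum of $T$,
\[
  \operatorname{tr}(T^{4})\;=\;\sum_{x,z}\Bigl(\sum_{y}\tau(x,y)\tau(y,z)\Bigr)^{2}
  \;\ge\;\frac{1}{n^{2}}\Bigl(\sum_{y}w(y)^{2}\Bigr)^{2}
  \;\ge\;\frac{1}{n^{4}}\Bigl(\sum_{y}w(y)\Bigr)^{4}\;=\;\frac{\hom(K_3,G)^{4}}{n^{4}}\,.
\]
Under our hypotheses the two extremes of this chain agree up to a factor $1\pm O(\delta)$ (recall $|V(C_4^{\triag})|=8$), so both inequalities above are tight to within $1\pm O(\delta)$. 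Tightness of the second one gives $\sum_{y}\bigl(w(y)-\bar w\bigr)^{2}\le O(\delta)\,p^{6}n^{5}$, where $\bar w:=n^{-1}\hom(K_3,G)=(1\pm O(\delta))\,p^{3}n^{2}$; equivalently $\bigl\|T\mathbf 1-p^{3}n^{2}\mathbf 1\bigr\|\le O(\delta^{1/2})\,p^{3}n^{5/2}$, i.e.\ the normalised all-ones vector $\hat 1:=\mathbf 1/\sqrt n$ is an approximate eigenvector of $T$ with approximate eigenvalue $p^{3}n^{2}$.

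Finally I would show that $T$ is essentially of rank one. Let $\lambda_1\ge\dots\ge\lambda_n$ be the eigenvalues of $T$ with orthonormal eigenvectors $v_1,\dots,v_n$; by Perron--Frobenius $\lambda_1=\max_i|\lambda_i|$ and $v_1$ may be taken nonnegative. The Rayleigh quotient at $\hat 1$ gives $\lambda_1\ge n^{-1}\hom(K_3,G)\ge(1-\delta)p^{3}n^{2}$, while $\lambda_1^{4}\le\operatorname{tr}(T^{4})=\hom(C_4^{\triag},G)\le(1+\delta)p^{12}n^{8}$; hence $\lambda_1=(1\pm O(\delta))p^{3}n^{2}$ and $\sum_{i\ge2}\lambda_i^{4}=\operatorname{tr}(T^{4})-\lambda_1^{4}\le O(\delta)p^{12}n^{8}$, so $|\lambda_i|\le O(\delta^{1/4})p^{3}n^{2}$ for all $i\ge2$ --- in particular $\lambda_1$ is well separated from the rest of the spectrum. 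Writing $\hat 1=\alpha v_1+v^{\perp}$ with $v^{\perp}\perp v_1$ and using that $T$ nearly fixes $\hat 1$, orthogonality of the $v_1$- and $v^{\perp}$-components yields $\bigl(p^{3}n^{2}-\max_{i\ge2}|\lambda_i|\bigr)^{2}\|v^{\perp}\|^{2}\le O(\delta)\,p^{6}n^{4}$, whence $\|v^{\perp}\|^{2}\le O(\delta)$ and (after fixing the sign of $v_1$) $\|v_1-\hat 1\|\le O(\delta^{1/2})$. Using $p^{3}nJ=p^{3}n^{2}\,\hat 1\hat 1^{\top}$, $\ \|T-\lambda_1 v_1v_1^{\top}\|_{\mathrm{op}}=\max_{i\ge2}|\lambda_i|$, and $\|v_1v_1^{\top}-\hat 1\hat 1^{\top}\|_{\mathrm{op}}\le2\|v_1-\hat 1\|$, we obtain
\[
  \|T-p^{3}nJ\|_{\mathrm{op}}\;\le\;\|T-\lambda_1 v_1v_1^{\top}\|_{\mathrm{op}}+|\lambda_1-p^{3}n^{2}|+p^{3}n^{2}\,\|v_1v_1^{\top}-\hat 1\hat 1^{\top}\|_{\mathrm{op}}\;\le\;C\,\delta^{1/4}\,n^{2}
\]
for an absolute constant $C$. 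Choosing $\delta$ so small that $C\delta^{1/4}\le\eta$ then completes the proof.

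The one genuinely delicate point is this passage from the two Cauchy--Schwarz near-equalities to operator-norm (equivalently, here, cut-norm) control of $T$. Near-equality by itself only gives almost-constant row sums and almost-constant $T$-codegrees, which would still be consistent with $T$ resembling a ``blurred'' bipartite double cover of the constant matrix. What excludes this --- and is the crux of the argument --- is that $\operatorname{tr}(T^{4})=\hom(C_4^{\triag},G)$ controls the \emph{entire} spectrum of $T$ simultaneously, forcing every eigenvalue but $\lambda_1$ to be negligible, while the near-tightness of the inner Cauchy--Schwarz pins the top eigenvector to the direction of $\hat 1$; Perron--Frobenius then removes the sign ambiguity and rules out a spurious large negative eigenvalue. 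The remainder is the routine bookkeeping through which the error exponent degrades from $\delta$ to roughly $\delta^{1/4}$.
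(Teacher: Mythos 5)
Your proof is correct, and it reaches Theorem~\ref{thm:2sets} by a genuinely different route than the paper does.

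The paper's proof also passes through the triangle-weighted structure, but implicitly: it introduces $T_x$ (your row sum $w(x)$) and $S_{u,v}$ (which is precisely $(T^2)_{u,v}$ in your notation), observes $\hom(K_3,G)=\sum_x T_x$, $\hom(S,G)=\sum_x T_x^2=\sum_{u,v}S_{u,v}$, $\hom(C_4^\triag,G)=\sum_{u,v}S_{u,v}^2$, and then, after the same double Cauchy--Schwarz chain $a^4\le b^2\le c$, applies a quantitative \emph{partial-sum} version of the equality case of Cauchy--Schwarz (Fact~\ref{CSI=}) three times in succession --- first to control $\tri(A,V,V)$, then $\sum_{(u,v)\in A^2}S_{u,v}$, then $\tri(A,B,V)$ --- landing directly on the hypothesis of Theorem~\ref{thm:2sets}. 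You instead package the same information as a single symmetric nonnegative matrix $T$ and make the rank-one structure explicit: $\operatorname{tr}(T^4)$ simultaneously pins $\lambda_1$ and crushes the tail of the spectrum, the near-fixing of $\hat{\mathbf 1}$ aligns the top eigenvector, Perron--Frobenius (or simply choosing the sign of $v_1$) fixes the orientation, and the three resulting error estimates combine into an operator-norm bound $\|T-p^3nJ\|_{\mathrm{op}}\le C\delta^{1/4}n^2$. What your approach buys is a conceptually cleaner statement (the triangle-weight matrix is approximately rank one with eigenvector $\hat{\mathbf 1}$, from which all $\tri(A,B,V)$ estimates drop out at once) at the cost of invoking spectral machinery; the paper's approach buys elementariness (no eigenvectors, only Cauchy--Schwarz with partial sums) at the cost of a slightly more hands-on bookkeeping of the three successive restrictions. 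The exponent $\delta^{1/4}$ degrades identically in both versions, and both ultimately defer the quasirandomness conclusion to Theorem~\ref{thm:2sets}, so the two proofs are quantitatively comparable. One small remark: you do not really need Perron--Frobenius to exclude a large negative bottom eigenvalue --- the fourth-moment bound $\sum_{i\ge 2}\lambda_i^4\le O(\delta)p^{12}n^8$ already controls $|\lambda_n|$ regardless of sign --- so its only essential use in your argument is to fix the sign of $\langle\hat{\mathbf 1},v_1\rangle$, and even that could be achieved by simply normalising $v_1$ to have nonnegative inner product with $\hat{\mathbf 1}$.
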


As it turns out Corollary~\ref{thm:K3Nforcing} applied in the context of weighted graphs 
allows the following general result, which is our main result on forcing pairs involving triangles.

\begin{cor}\label{thm:Hauptsatz}
	If~$(K_2, F)$ is a forcing pair, then so is~$(K_3,F^\triag)$.
\end{cor}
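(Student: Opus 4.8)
The plan is to repackage the triangle structure of $G$ as edge weights, so that the hypothesis that $(K_2,F)$ is forcing can be applied directly. Given a graph $G=(V,E)$ with $n=|V|$, I would introduce the weighted graph $W$ on the vertex set $V$ whose weight on a pair $\{u,v\}$ with $u\neq v$ is
\[
	w(u,v)=\tfrac1n\,\big|\{x\in V\colon uv,\,ux,\,vx\in E\}\big|\in[0,1]\,,
\]
the normalised number of triangles of $G$ through $uv$; in particular $w(u,v)=0$ whenever $uv\notin E$. A homomorphism $F^\triag\to G$ amounts to a map $\phi\colon V(F)\to V$ together with, for every edge $\{x,y\}\in E(F)$, a choice of a vertex completing a triangle with $\phi(x)$ and $\phi(y)$; since $F^\triag$ has $|V(F)|+e(F)$ vertices and $3\,e(F)$ edges, this yields the two identities
\[
	\t(K_2,W)=\t(K_3,G)
	\qqand
	\t(F,W)=\t(F^\triag,G)\,,
\]
where $\t(\,\cdot\,,W)$ denotes the weighted homomorphism density, i.e.\ the average over all maps $\phi\colon V(F)\to V$ of the product $\prod_{\{x,y\}\in E(F)}w(\phi(x),\phi(y))$. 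Moreover $e_W(X,Y)=\tfrac1n\,\tri(X,Y,V)$ for all $X,Y\subseteq V$, so that ``$W$ is $(\eta,p^3)$-quasirandom'' is literally the statement that $G$ satisfies~\eqref{eq:2sets} with parameter $\eta$.

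With these identities the corollary should follow quickly. Given $\eps>0$ and $p\in(0,1]$, I would first take $\eta>0$ from Theorem~\ref{thm:2sets}, then set $q=p^3$ and let $\delta>0$ witness, via the forcing property of $(K_2,F)$ used for weighted graphs (legitimate for the reason discussed below), that the assumptions $\t(K_2,\,\cdot\,)\ge(1-\delta)q$ and $\t(F,\,\cdot\,)\le(1+\delta)q^{e(F)}$ force $(\eta,q)$-quasirandomness. If a graph $G$ satisfies $\t(K_3,G)\ge(1-\delta)p^3$ and $\t(F^\triag,G)\le(1+\delta)p^{3\,e(F)}$, the identities above rewrite this as $\t(K_2,W)\ge(1-\delta)q$ and $\t(F,W)\le(1+\delta)q^{e(F)}$; hence $W$ is $(\eta,q)$-quasirandom, which is to say $G$ satisfies~\eqref{eq:2sets}, and Theorem~\ref{thm:2sets} then gives that $G$ is $(\eps,p)$-quasirandom. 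In particular, specialising to $F=C_4$---for which $(K_2,C_4)$ is a forcing pair, the classical example---this reproves Corollary~\ref{thm:K3Nforcing}: the general statement is obtained by carrying out the proof of that corollary for an arbitrary $F$ in place of $C_4$.

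The one step that is not routine bookkeeping is that a forcing pair was defined for ordinary graphs, whereas above it is applied to the weighted graph $W$, whose weights lie in $[0,1]$; I therefore need that a forcing pair of graphs is also forcing for such weighted graphs, and I expect this transfer to be the main obstacle. I would establish it either through the graphon formalism---$W$ corresponds to a step function, homomorphism densities are unchanged, and $(\eta,q)$-quasirandomness of $W$ corresponds to cut-norm closeness of that step function to the constant $q$, so the usual compactness and continuity arguments apply---or, more concretely, by a rounding argument: replace each vertex of $W$ by $m$ clones and insert between the clone classes of $u$ and $v$ a random (or explicitly quasirandom) bipartite graph of density $w(u,v)$, producing an honest graph $H$ with $\t(K_2,H)=\t(K_2,W)+o(1)$ and $\t(F,H)=\t(F,W)+o(1)$ and with $H$ close in cut distance to the $m$-fold blow-up of $W$; then applying the graph version of the forcing property to $H$ and transferring back to $W$ gives what is required. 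Everything else is manipulation of homomorphism counts.
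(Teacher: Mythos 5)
Your proposal is correct and rests on the same central idea as the paper's proof: pass from $G$ to the triangle-codegree kernel $w(u,v)=\tfrac1n|\{x:uv,ux,vx\in E\}|$, note the identities $\t(K_2,W)=\t(K_3,G)$ and $\t(F,W)=\t(F^\triag,G)$ together with $e_W(X,Y)=\tfrac1n\tri(X,Y,V)$, apply the forcing property of $(K_2,F)$ to conclude that $W$ is nearly constant of value $p^3$, and then invoke Theorem~\ref{thm:2sets} to deduce that $G$ is quasirandom. The one place where the paper's presentation differs is exactly the point you flag as the potential obstacle. Rather than extending the $\eps$--$\delta$ forcing definition to weighted graphs, the paper works at the graphon level from the outset: it records (Lemma~\ref{lem:folklore}) that $(F_1,F_2)$ is forcing if and only if every graphon $W$ with $\t(F_i,W)=p^{e(F_i)}$ satisfies $W\equiv p$, defines the continuous triangle-codegree kernel $U(x,y)=W(x,y)\int_0^1 W(x,z)W(y,z)\,\mathrm{d}z$, applies $(K_2,F)$-forcing in this exact form to get $U\equiv p^3$, and then uses the analytic counterpart of Theorem~\ref{thm:2sets} (Theorem~\ref{thm:2sets-ana}) to conclude $W\equiv p$. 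This makes the ``apply forcing to a weighted object'' step immediate, since graphons already subsume step functions with $[0,1]$ values; your alternative routes (step graphon or blow-up rounding) would also work but require more bookkeeping. In short, same idea, but the paper's choice of the graphon framework via Lemma~\ref{lem:folklore} is precisely what dissolves the transfer issue you identified.
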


\subsection*{Organisation} 
We prove Theorem~\ref{thm:2sets} in Section~\ref{sec:2sets}
and Corollary~\ref{thm:K3Nforcing} in Section~\ref{sec:forcing}. 
In Section~\ref{sec:ana} we switch to the analytical language of graphons
and prove Corollary~\ref{thm:Hauptsatz}.
We conclude by recording some further observations and problems for future research
in Section~\ref{sec:conc}.

\section{The two sets condition} \label{sec:2sets}
The proof of Theorem~\ref{thm:2sets} is based on the regularity method for graphs. 
This means that we use a regularity lemma and a counting lemma in order 
to reduce the problem at hand to a somewhat different one that speaks about a certain 
``reduced graph.'' 
In the present situation we need to conceive this reduced graph 
as a {\it weighted graph}. Such objects may also be regarded as symmetric matrices 
with entries from the unit interval. The precise statement we shall require is stated as 
Lemma~\ref{lem:2sets-reduced} below. The interested reader may check that this lemma could
conversely also be deduced from Theorem~\ref{thm:2sets}. 

\begin{lemma}\label{lem:2sets-reduced}
Given any real numbers $p\in(0,1]$ and $\eps>0$ there is a real $\delta>0$ such that the 
following holds: Let $(d_{ij})_{i,j\in[t]}\in[0,1]^{t\times t}$ be a symmetric matrix 
such that for all distinct indices $i, j\in [t]$ we have 
\begin{equation}\label{eq:dijk}
	d_{ij}\sum_{k\in[t]}d_{ik}d_{jk}=(p^3\pm \delta)t\,.
\end{equation}
Then $d_{ij}=p\pm\eps$ holds for all $i,j\in[t]$.
\end{lemma}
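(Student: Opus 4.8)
The hypothesis~\eqref{eq:dijk} says that the quantity $d_{ij}\sum_k d_{ik}d_{jk}$ is essentially constant, equal to $p^3t$, for every off-diagonal entry. I would think of this as a near-equality in a chain of inequalities obtained from Cauchy--Schwarz, and try to extract from it that every $d_{ij}$ is close to $p$. The first step is to normalise: set $d_{ij}=p\,e_{ij}$ so that the goal becomes showing $e_{ij}=1\pm\eps/p$, and the hypothesis reads $e_{ij}\sum_k e_{ik}e_{jk}=(1\pm\delta/p^3)t$. Summing~\eqref{eq:dijk} over all ordered pairs $(i,j)$ of distinct indices and comparing with the same sum in which one first applies Cauchy--Schwarz or convexity should already pin down the average of the $d_{ij}$, the average of $\sum_k d_{ik}d_{jk}$ (a ``codegree''-type quantity), and force these averages to be compatible only if most entries are near $p$.

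\medskip

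\textbf{Main steps.} Concretely, I would proceed as follows. First, fix an index $k$ and consider the vector $v^{(k)}=(d_{ik})_{i\in[t]}$; the inner product $\langle v^{(k)}, v^{(l)}\rangle$-type sums $\sum_k d_{ik}d_{jk}$ are the entries of $D^2$ where $D=(d_{ij})$. So~\eqref{eq:dijk} says $d_{ij}(D^2)_{ij}\approx p^3t$ off the diagonal. Second, I would estimate $\sum_{i,j} d_{ij}(D^2)_{ij} = \operatorname{tr}(D\cdot D^2\circ\text{-ish})$ — more precisely $\sum_{i,j}d_{ij}(D^2)_{ij}$ equals $\sum_{i,j,k}d_{ij}d_{ik}d_{jk}$, the (weighted) triangle count, which by the hypothesis is $(p^3\pm\delta)t^3 + O(t^2)$ (the error absorbing the diagonal terms $i=j$). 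Third, the key analytic input: I would bound this same triangle count from below in terms of the edge density $\bar d = t^{-2}\sum_{i,j}d_{ij}$ and a Cauchy--Schwarz/convexity argument, obtaining something like $\sum_{i,j,k}d_{ij}d_{ik}d_{jk}\ge (\text{something})$, and from below also get control forcing $\bar d$ near $p$. Fourth, and this is the crucial leverage, for each \emph{fixed} pair $i,j$ apply Cauchy--Schwarz to $\sum_k d_{ik}d_{jk}$ against $\sum_k d_{ik}^2$ and $\sum_k d_{jk}^2$; combined with the rigidity of~\eqref{eq:dijk} holding \emph{for all} pairs simultaneously, deduce that the row sums $r_i=\sum_k d_{ik}$ are all close to $pt$ and the ``row-square sums'' $\sum_k d_{ik}^2$ are close to $p^2t$, which by Cauchy--Schwarz (equality case) forces each row $(d_{ik})_k$ to be close to the constant vector $(p,\dots,p)$ in $\ell^2$, hence $d_{ij}=p\pm\eps$ for all but few $j$; a final bootstrapping/cleaning step upgrades ``all but few'' to ``all''.

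\medskip

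\textbf{Anticipated obstacle.} The hard part will be Step four: turning the \emph{global} near-equality (one scalar equation per pair, roughly $\binom t2$ equations) into a \emph{pointwise} conclusion $d_{ij}=p\pm\eps$ for \emph{every} pair, including the exceptional ones, with an $\eps$ independent of $t$. The danger is a small number of ``bad'' indices $i$ whose rows are far from constant but which still conspire to satisfy~\eqref{eq:dijk} because the defining equation couples $d_{ij}$ with the mixed sum $\sum_k d_{ik}d_{jk}$ — e.g.\ a large $d_{ij}$ compensated by a small codegree. Ruling this out requires using~\eqref{eq:dijk} for the bad pair against~\eqref{eq:dijk} for a typical pair sharing an index, i.e.\ a stability analysis of the equality case of Cauchy--Schwarz that is uniform over all rows at once. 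I expect this to need an averaging argument over $k$ together with a careful choice of auxiliary index sets, after which the remaining estimates are routine.
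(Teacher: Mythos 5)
Your proposal does not actually contain a proof; it contains a plan whose decisive step is deferred. The gap is precisely your ``Step four'' and the ``anticipated obstacle'' paragraph: you want to show that the row sums $\sum_k d_{ik}$ are near $pt$, the row-square sums $\sum_k d_{ik}^2$ are near $p^2t$, and then invoke stability of Cauchy--Schwarz, but you give no argument that gets you either of those two estimates from~\eqref{eq:dijk}, nor do you explain how to rule out the compensating configurations (large $d_{ij}$ paired with small codegree $\sum_k d_{ik}d_{jk}$, or vice versa) that you yourself flag as the danger. Note also that your Step two overclaims: summing~\eqref{eq:dijk} over all pairs only controls the weighted triangle count $\sum_{i,j,k}d_{ij}d_{ik}d_{jk}$, and this alone does not ``pin down the average of the $d_{ij}$'' --- the pair $(K_2,K_3)$ is famously \emph{not} forcing, so the averages $t^{-2}\sum_{ij}d_{ij}$ and $t^{-3}\sum_{ijk}d_{ij}d_{ik}d_{jk}$ can both be on target while the matrix is far from constant. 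The whole content of the lemma is in turning the \emph{pointwise} hypothesis into a \emph{pointwise} conclusion, and that is exactly what you leave unresolved (``I expect this to need an averaging argument \dots after which the remaining estimates are routine'').

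For comparison, the paper's proof avoids averaging and the ``all but few $\Rightarrow$ all'' bootstrapping entirely by an extremal device: pick indices $x,y,z$ maximising $a-b=d_{xy}-d_{xz}$, which immediately gives $d_{yi}\ge b$ and $d_{zi}\le a$ for every $i$. Subtracting~\eqref{eq:dijk} for the pairs $(x,y)$ and $(x,z)$ then produces the one-sided sum $\sum_i d_{xi}(ad_{yi}-bd_{zi})\le 2\delta t$ whose summands are all nonnegative (this is where the cheap lower bound $d_{ij}\ge p^3/2$ enters), hence each is individually small on average; choosing a good index $w$ and repeating with the pairs $(w,y)$, $(w,z)$ closes an algebraic loop giving $a-b=O(\delta)$. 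The maximality then transfers this to \emph{all} pairs at once via the triangle inequality $|d_{ij}-d_{k\ell}|\le 2(a-b)$, and a final appeal to~\eqref{eq:dijk} locates the common value near $p$. If you want to salvage your route, you would need to actually carry out the stability analysis you postpone, and in particular to handle the non-exceptional-to-exceptional transfer; the paper's maximality trick is the clean way to do that, and I would encourage you to study why choosing the extremal pair makes the one-sided bounds~\eqref{eq:diyz} available for free.
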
  

\begin{proof} Throughout the proof we work with the hierarchy $\delta\ll\rho\ll p,\eps$ for some 
auxiliary chosen constant $\rho$, where we write $\alpha\ll \beta$ to signify that $\alpha$ will be chosen 
sufficiently small depending on $\beta$.
\parpic[l]{\includegraphics[width=3.6cm]{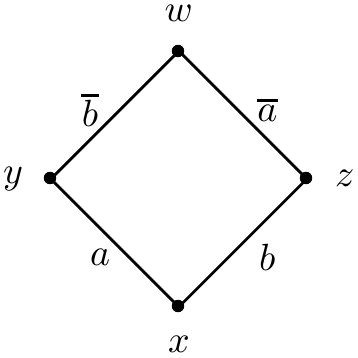}\smallskip} 
Since the sum on the left-hand side 
of~\eqref{eq:dijk} is at most $t$, we have 
${d_{ij}\ge p^3-\delta\ge p^3/2}$ for all $i, j\in[t]$. 
The main idea is to choose three indices $x, y, z\in [t]$ such that the difference $d_{xy}-d_{xz}$ 
is as large as possible. In the picture to the left, these indices $x$, $y$, and $z$ are
represented as ``vertices'' and the labels $a$ and $b$ attached to the ``edges''~$xy$ and 
$xz$ indicate that we set $a=d_{xy}$ and $b=d_{xz}$, respectively. The vertex $w$, which is 
shown there as well, will be chosen later in the argument.  

The maximality of $a-b$ entails
\begin{equation}\label{eq:diyz}
	d_{yi}\ge b \,\,\, \text{and} \,\,\, d_{zi}\le a \,\,\, \text{for all } i\in [t]\,.
\end{equation}
By~\eqref{eq:dijk} we have
\[
	a\cdot\sum_{i\in[t]}d_{xi}d_{yi}\le (p^3+\delta)t
	\quad \text{ and } \quad
	b\cdot\sum_{i\in[t]}d_{xi}d_{zi}\ge (p^3-\delta)t\,,
\]
whence
\[
	\sum_{i\in[t]}d_{xi}(ad_{yi}-bd_{zi})\le 2\delta t\,.
\]
Now~\eqref{eq:diyz} yields 
\[
	ad_{yi}-bd_{zi}=a(d_{yi}-b)+b(a-d_{zi})\ge \big((d_{yi}-b)+(a-d_{zi})\big)p^3/2\ge 0
\]
for all $i\in [t]$, so it follows that
\begin{equation}\label{eq:dsums}
	\sum_{i\in[t]}(d_{yi}-b)+\sum_{i\in[t]}(a-d_{zi})\le 8p^{-6}\delta t\,,
\end{equation}
where neither sum on the left-hand side is negative. Thus there is an index $w\in[t]$ with 
$(d_{wy}-b)+(a-d_{wz})\le 8p^{-6}\delta$. In particular, the numbers $\overline{a}=d_{wz}\leq a$ and 
$\overline{b}=d_{wy}\geq b$ (see~\eqref{eq:diyz}) satisfy
\begin{equation}\label{eq:aabb}
	|a-\overline{a}|+|\overline{b}-b|\le 8p^{-6}\delta\,.
\end{equation}
Applying~\eqref{eq:dijk} to the pairs $(w,y)$ and $(w,z)$ in place of $(i,j)$ 
and subtracting the resulting estimates we obtain
\[
	\Big|\sum_{i\in[t]}d_{wi}(\overline{a}d_{zi}-\overline{b}d_{yi})\Big|\le 2\delta t\,.
\]
Thus the triangle inequality and~\eqref{eq:dsums} lead to
\begin{align*}
	|a\overline{a}-b\overline{b}|\cdot p^3t/2 &\le 
	\Big|\sum_{i\in [t]} d_{wi}(a\overline{a}-b\overline{b})\Big| \\
	&\le \Big|\sum_{i\in [t]} d_{wi}\overline{a}(a-d_{zi})\Big|
	+\Big|\sum_{i\in [t]} d_{wi}\overline{b}(d_{yi}-b)\Big|
	+\Big|\sum_{i\in[t]}d_{wi}(\overline{a}d_{zi}-\overline{b}d_{yi})\Big| \\
	&\le 2(1+4p^{-6})\delta t\,,
\end{align*}
i.e., $|a\overline{a}-b\overline{b}|\le 4p^{-3}(1+4p^{-6})\delta$. 
Consequently~\eqref{eq:aabb} yields
\[
	p^3(a-b)\le a^2-b^2\le |a\overline{a}-b\overline{b}|+a|a-\overline{a}|+b|\overline{b}-b|	
	\le 4p^{-3}(1+4p^{-6})\delta+8p^{-6}\delta
\]
and thus $a-b\le 4p^{-6}(1+2p^{-3}+4p^{-6})\delta\le \rho$. 

Now for any four indices $i,j,k,\ell\in[t]$ the extremal choice of $a-b$ gives 
\[
	|d_{ij}-d_{k\ell}|\le |d_{ij}-d_{i\ell}|+|d_{i\ell}-d_{k\ell}|\le 2(a-b)\le 2\rho\,.
\] 
In other words, there is an interval of length $2\rho$ containing all the $d_{ij}$.
In the light of~\eqref{eq:dijk} and the smallness of $\rho$ this interval needs to be 
contained in $(p-\eps, p+\eps)$. Thereby Lemma~\ref{lem:2sets-reduced} is proved.
\end{proof}

As we have already said, our proof of Theorem~\ref{thm:2sets} depends on Szemer\'edi's 
regularity lemma~\cite{Sz78}, a version of which we would like to state next.

\begin{thm}[Regularity lemma]\label{lem:RL}
For every positive real number $\delta$ there is a positive integer~$T$ such that every
graph $G=(V,E)$ admits a partition $V=V_0\dcup V_1\dcup\ldots\dcup V_t$ of its vertex set 
obeying the following conditions:
\begin{enumerate}[label=\alabel]
\item \label{it:RLa} $t\le T$, $|V_0|\le \delta\,|V|$, and $|V_1|=\ldots=|V_t|>0$.
\item \label{it:RLb} For each $i\in[t]$ there are at most $\delta t$ many indices $j\in[t]$ 
						such that the pair $(V_i, V_j)$ is not $\delta$-quasirandom.\qed
\end{enumerate}
\end{thm}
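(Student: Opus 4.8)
The plan is to obtain this ``small maximum degree'' form of Szemer\'edi's regularity lemma from the classical statement by a brief redistribution argument. One could instead try to run the energy-increment machinery so as to produce the per-class bound in one step, but the energy gained from a single badly behaved class is too small to be useful, so it is cleaner to keep the two steps apart.

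First I would invoke the classical regularity lemma: for every $\delta_0>0$ there is an integer $T_0$ such that every graph $G=(V,E)$ admits an equitable partition $V=V_0\dcup V_1\dcup\dots\dcup V_{t'}$ with $t'\le T_0$, $|V_0|\le\delta_0|V|$, $|V_1|=\dots=|V_{t'}|>0$, and all but at most $\delta_0\binom{t'}{2}$ of the pairs $(V_i,V_j)$ being $\delta_0$-quasirandom. For completeness I would recall its proof through the mean-square density $q(\cP)=\sum_{i,j}\tfrac{|V_i|\,|V_j|}{|V|^2}d(V_i,V_j)^2\in[0,1]$ of a partition~$\cP$: a common refinement never decreases~$q$, and if a partition into $t'$ parts violates the regularity conclusion, then refining each part by the Boolean combination of the discrepancy witnesses of its irregular pairs increases~$q$ by at least a positive constant depending only on~$\delta_0$ (the defect form of the Cauchy--Schwarz inequality); since $q\le1$, this loop terminates after a bounded number of rounds, each of which multiplies the number of parts by at most~$2^{t'}$, so that after re-equalising the resulting refinement one arrives at a partition with the stated properties.

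Next I would apply this with $\delta_0=\delta^2/8$, obtaining a partition into $t'\le T_0=:T$ parts, and call an index $i\in[t']$ \emph{heavy} if $(V_i,V_j)$ fails to be $\delta$-quasirandom for more than $\tfrac{\delta}{2}\,t'$ indices~$j$. Since $\delta_0<\delta$, a pair that is not $\delta$-quasirandom is a fortiori not $\delta_0$-quasirandom, hence one of the at most $\delta_0\binom{t'}{2}$ irregular pairs of the classical partition; summing over the heavy indices and counting each pair at most twice bounds the number of heavy indices by $2\delta_0\binom{t'}{2}\big/\bigl(\tfrac{\delta}{2}t'\bigr)<\tfrac{\delta}{4}\,t'$. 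I then move every heavy class into the exceptional set: the enlarged $V_0$ has size at most $\delta_0|V|+\tfrac{\delta}{4}t'\cdot|V_1|\le\tfrac{\delta}{2}|V|\le\delta|V|$; deleting the fewer than $\tfrac{\delta}{4}t'$ heavy classes leaves $t\ge\tfrac34 t'$ classes, which remain equitable, non-empty, and at most~$T$ in number; and for each surviving~$i$ the number of surviving~$j$ with $(V_i,V_j)$ not $\delta$-quasirandom is at most $\tfrac{\delta}{2}t'\le\tfrac23\delta t\le\delta t$. These are precisely conditions~\ref{it:RLa} and~\ref{it:RLb}.

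The only delicate point, and the reason for proceeding in two stages, is that the energy-increment argument naturally controls the \emph{total} number of irregular pairs rather than the maximum number incident to a class: a single heavy class forces a density increment of order only $1/t'$, which is not bounded away from~$0$, so one cannot simply iterate it. The redistribution step circumvents this by absorbing the provably few heavy classes into~$V_0$ rather than trying to repair them, and there it only remains to check --- which is immediate, since the surviving classes and the graph spanned between them are left untouched --- that enlarging $V_0$ keeps $|V_0|\le\delta|V|$ and preserves both equitability and the quasirandomness of the retained pairs.
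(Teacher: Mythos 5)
Your proof is correct and takes essentially the same route the paper sketches in the discussion following the statement: apply the standard regularity lemma with a suitably smaller parameter ($\delta_0\ll\delta$) and relocate to $V_0$ the few classes incident to too many non-quasirandom pairs. Your choice of the $\tfrac{\delta}{2}t'$ heaviness threshold (rather than $\delta t$) is exactly the right device for making this one-shot relocation self-consistent, since the bound in~\ref{it:RLb} refers to the post-relocation number of classes $t\ge\tfrac34 t'$.
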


Here a pair $(A, B)$ of nonempty subsets of $V$, say with density 
$d=\frac{e(A, B)}{|A|\,|B|}$, is said to be~{\it $\delta$-quasirandom} 
if we have $e(X, Y)=d\,|X|\,|Y|\pm\delta\,|A|\,|B|$ for all $X\subseteq A$ and $Y\subseteq B$. 

The above statement differs in several aspects from the ``standard''  
regularity lemma and we briefly discuss those differences:
\begin{itemize}
\item  The crucial 
property obtained for most pairs $(V_i, V_j)$ is often taken to be something called 
$\delta$-regularity\footnote[1]{A pair $(A, B)$ of subsets of $G$ is said to be 
$\delta$-regular if $|d(X, Y)-d(A, B)|\le\delta$ holds for all 
$X\subseteq A$ and~$Y\subseteq B$ with $|X|\ge \delta|A|$ and $|Y|\ge\delta |B|$.} 
rather than $\delta$-quasirandomness. These two concepts are known 
to be equivalent up to polynomial losses in the involved constants, and in fact 
$\delta$-regularity implies $\delta$-quasirandomness. Our reason for working with this 
notion here is that allows a slightly cleaner presentation of the proof.
\item Instead of the second condition one usually finds a weaker clause just stating that at 
most $\delta t^2$ pairs $(V_i, V_j)$ fail to be quasirandom in the literature. 
The above version has also been used and
to obtain it, one may apply the standard version of the regularity lemma with some appropriate 
$\delta'\ll\delta$ in place of $\delta$ and then relocate all classes~$V_i$ 
with~$i>0$ that violate~\ref{it:RLb} to~$V_0$.
\item Usually one requires also a lower bound $t_0$ on the number of vertex classes~$t$ in advance 
and then one obtains $T\ge t\ge \min(t_0, |V|)$ rather than just $T\ge t$
in the first part of~\ref{it:RLa}. 
The rationale behind this is that in many applications one has no 
intentions of ``looking inside the individual $V_i$,'' wherefore it brings certain advantages
to have these sets reasonably small. In our current situation, however, even the extreme 
outcome $t=1$ would be useful. In view of~\ref{it:RLb} it would mean that the pair 
$(V_1, V_1)$ is $\delta$-quasirandom, and since, provided that $\delta$ is small, 
$V_1$ would be almost all of $V(G)$, this is essentially all we need to infer for the proof of Theorem~\ref{thm:2sets}.
\end{itemize}

In the course of proving Theorem~\ref{thm:2sets} we will also need to be able to count 
triangles after regularising $G$. This will be rendered by the following strong, but 
well-known, form of the triangle counting lemma. 

\begin{lemma}[Triangle counting lemmma]\label{lem:CL}
Let $A$, $B$, and $C$ denote three nonempty subsets of~$V(G)$ for some graph $G$. 
Suppose that the pairs $(B, C)$, $(C, A)$, and $(A, B)$ have edge densities $\alpha$, $\beta$, 
and $\gamma$ respectively, and that at least two of these three pairs are 
$\delta$-quasirandom. 
Then $\tri(A, B, C)=(\alpha\beta\gamma\pm 2\delta)|A|\,|B|\,|C|$.\qed
\end{lemma}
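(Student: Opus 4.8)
The plan is to count the triangles by first fixing the vertex that lies in $A$, and then to invoke the quasirandomness hypothesis exactly twice: once to pass from triangles to paths of length two, and once more to count those paths. Relabelling the three sets if necessary, I may assume that the two $\delta$-quasirandom pairs are $(A,B)$ and $(B,C)$, of densities $\gamma$ and $\alpha$ respectively; this is legitimate because the asserted conclusion is symmetric under any simultaneous permutation of $A,B,C$ and of $\alpha,\beta,\gamma$. The starting point is the identity
\[
	\tri(A,B,C)=\sum_{a\in A} e_G\big(N(a)\cap B,\, N(a)\cap C\big)\,,
\]
where $N(v)$ denotes the neighbourhood of $v$ in $G$; it merely records that a triangle meeting $A$, $B$, $C$ in one vertex each is the same data as a vertex $a\in A$ together with an edge of $G$ running from $N(a)\cap B$ to $N(a)\cap C$.

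For the first application I would use $\delta$-quasirandomness of the pair $(B,C)$ with the sets $X=N(a)\cap B\subseteq B$ and $Y=N(a)\cap C\subseteq C$, obtaining
\[
	e_G\big(N(a)\cap B,\, N(a)\cap C\big)=\alpha\,|N(a)\cap B|\,|N(a)\cap C|\pm\delta\,|B|\,|C|
\]
for every $a\in A$; summing over the $|A|$ choices of $a$ gives $\tri(A,B,C)=\alpha\sum_{a\in A}|N(a)\cap B|\,|N(a)\cap C|\pm\delta\,|A|\,|B|\,|C|$. The one point worth flagging is that there is no hope of applying quasirandomness ``vertex by vertex'' to control a quantity such as $|N(a)\cap B|$ directly, since the defect $\delta\,|B|\,|C|$ dwarfs it; the argument works only because one aggregates over all of $A$ first and pays the defect just once per vertex, for a total error linear in $|A|\,|B|\,|C|$.

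It then remains to show that $\sum_{a\in A}|N(a)\cap B|\,|N(a)\cap C|=(\beta\gamma\pm\delta)\,|A|\,|B|\,|C|$. I would read the left-hand side as the number of paths of length two with middle vertex in $A$ and endpoints in $B$ and $C$, reorganised around the endpoint lying in $C$, namely $\sum_{c\in C}e_G\big(N(c)\cap A,\,B\big)$; applying $\delta$-quasirandomness of $(A,B)$ with $X=N(c)\cap A$ and $Y=B$ to each summand and then summing over $c\in C$ turns this into $\gamma\,|B|\sum_{c\in C}|N(c)\cap A|\pm\delta\,|A|\,|B|\,|C|$, and since $\sum_{c\in C}|N(c)\cap A|=e_G(A,C)=\beta\,|A|\,|C|$ by the definition of $\beta$, the claim follows. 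Substituting back and using $\alpha\le 1$ to merge the two error terms yields $\tri(A,B,C)=\alpha\beta\gamma\,|A|\,|B|\,|C|\pm(\alpha+1)\delta\,|A|\,|B|\,|C|=(\alpha\beta\gamma\pm2\delta)\,|A|\,|B|\,|C|$. I do not anticipate any genuine obstacle here: the argument is entirely standard, and the only thing demanding a little care is the bookkeeping — which of the two quasirandom pairs to use at which step, and keeping the accumulated error term linear in $|A|\,|B|\,|C|$ throughout.
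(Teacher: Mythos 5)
The paper states this triangle counting lemma with a closing \qed and no proof, treating it as a well-known fact, so there is no internal proof to compare against and your argument has to stand on its own. It does: the WLOG reduction is legitimate since the conclusion is invariant under the simultaneous $S_3$-action on $(A,B,C)$ and on $(\alpha,\beta,\gamma)$ (with each of $\alpha,\beta,\gamma$ labelling the pair opposite the corresponding set), and one may always relabel so that the common vertex class of the two quasirandom pairs is $B$. Both applications of $\delta$-quasirandomness are instances of the definition in the paper — first with $X=N(a)\cap B\subseteq B$, $Y=N(a)\cap C\subseteq C$ for the pair $(B,C)$, incurring error $\delta\,|B|\,|C|$ per $a\in A$, then with $X=N(c)\cap A\subseteq A$, $Y=B$ for the pair $(A,B)$, incurring error $\delta\,|A|\,|B|$ per $c\in C$ — and the identity $\sum_{c\in C}|N(c)\cap A|=e_G(C,A)=\beta\,|C|\,|A|$ closes the loop. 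The final merge of the two error terms uses $\alpha\le 1$, which indeed holds for the ordered-pair edge count $e_G(B,C)\le|B|\,|C|$ even if $B$ and $C$ overlap. This is the standard double-counting proof of the statement, and your remark that one must aggregate over $A$ before invoking quasirandomness (rather than trying to control $|N(a)\cap B|$ pointwise) correctly identifies why the error stays linear in $|A|\,|B|\,|C|$.
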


We apply Lemma~\ref{lem:2sets-reduced} together with the regularity method in 
form of Theorem~\ref{lem:RL} and Lemma~\ref{lem:CL} and
deduce Theorem~\ref{thm:2sets}.

\begin{proof}[Proof of Theorem~\ref{thm:2sets}] 
We begin by choosing certain auxiliary constants obeying the hierarchy
\[
	\eta\ll T^{-1}\ll \delta\ll\eps, p\,,
\]
where $T$ is the integer obtained by applying the regularity Theorem~\ref{lem:RL} with $\delta$. 
Now let any graph $G=(V, E)$ satisfying
\begin{equation}\label{eq:T2}
	\tri(A, B, V)=p^3\,|A|\,|B|\,|V|\pm\eta\,|V|^3
\end{equation}
for all $A, B\subseteq V$ be given. The regularity lemma yields a partition 
\[
	V(G)=V_0\dcup V_1\dcup\ldots\dcup V_t
\]
satisfying the above clauses~\ref{it:RLa} and~\ref{it:RLb}. 
For $i, j\in [t]$ we denote the density of the pair 
$(V_i, V_j)$ by $d_{ij}$. The assumption~\eqref{eq:T2} is only going to be used in the 
special case $A, B\in\{V_1, \ldots, V_t\}$. It then discloses the following useful property 
of the numbers $d_{ij}$:
\begin{equation}\label{eq:dij}
	d_{ij}\sum_{k\in[t]}d_{ik}d_{jk}=(p^3\pm 9\delta)t\,,
\end{equation}
for all $i$, $j\in[t]$.
To see this, we consider any two indices $i, j\in [t]$. 
Let $R_i$ denote the set of all $k\in[t]$ for which the pair $(V_i, V_k)$ is 
not $\delta$-quasirandom, let $R_j$ be defined similarly with respect to $j$, 
and set $R=R_i\cup R_j$. Owing to condition~\ref{it:RLb} from Theorem~\ref{lem:RL} 
we have $|R_i|\le \delta t$ and $|R_j|\le \delta t$, whence $|R|\le 2\delta t$. 
Let us write $M=|V_1|=\ldots=|V_t|$. Then~$Mt=|V|-|V_0|\ge (1-\delta)\,|V|$. 
As we may assume $\delta\le\tfrac12$, is follows that $|V|\le 2Mt$, 
whence $|V_0|\le 2\delta Mt$. Now we have
\[
	\Big|\tri(V_i, V_j, V)-M^3d_{ij}\sum_{k\in[t]}d_{ik}d_{jk}\Big|
	\le
	\tri(V_i, V_j, V_0)+\sum_{k\in[t]}\big|\tri(V_i, V_j, V_k)-M^3d_{ij}d_{ik}d_{jk}\big|\,.
\]
Here the first term may be estimated trivially by
\[
	\tri(V_i, V_j, V_0)\le |V_0|\,|V_i|\,|V_j|\le 2\delta M^3t\,.
\]
Moreover, for $k\in [t]\setminus R$ the triangle counting lemma (Lemma~\ref{lem:CL}) tells us that
\[
	\big|\tri(V_i, V_j, V_k)-M^3d_{ij}d_{ik}d_{jk}\big|\le 2\delta M^3\,,
\]
while for $k\in R$ we still have the obvious bound
\[
	\big|\tri(V_i, V_j, V_k)-M^3d_{ij}d_{ik}d_{jk}\big|\le M^3\,.
\]
Due to $|R|\le 2\delta t$ all this combines to 
\begin{equation}\label{eq:41-2a}
	\Big|\tri(V_i, V_j, V)-M^3d_{ij}\sum_{k\in[t]}d_{ik}d_{jk}\Big|\le 6\delta M^3t\,.
\end{equation}
On the other hand, plugging $A=V_i$ and $B=V_j$ into~\eqref{eq:T2} we learn
\[
	\big|\tri(V_i, V_j, V)-p^3M^2|V|\big|\le \eta\,|V|^3\,, 
\]
which in turn yields 
\[
	\big|\tri(V_i, V_j, V)-p^3M^3t\big|\le p^3M^2\bigl(|V|-Mt\bigr)+\eta\,|V|^3\,.
\]
In view of 
\[
	M^2\bigl(|V|-Mt\bigr)+\eta\,|V|^3= M^2\,|V_0|+\eta\,|V|^3
	\le M^3\bigl(2\delta t +8\eta t^3\bigr)\le M^3t(2\delta+8\eta T^2)
\]
a suitable choice of $\eta$ leads to 
\[
	\big|\tri(V_i, V_j, V)-p^3M^3t\big|\le 3\delta M^3t\,.
\]
Together with~\eqref{eq:41-2a} this concludes the proof of~\eqref{eq:dij}. 

We may assume that depending on $\eps$ and $p$ the constant $\delta$ has been chosen 
so small that Lemma~\ref{lem:2sets-reduced} guarantees $d_{ij}=p\pm \tfrac\eps 2$ 
for all $i, j\in [t]$.
Let us write $S$ for the set of all pairs~$(i, j)\in [t]^2$ such that the pair $(V_i, V_j)$ 
is not $\delta$-quasirandom. Notice that condition~\ref{it:RLb} from Theorem~\ref{lem:RL} 
implies $|S|\le \delta t^2$. 

Now for any $A, B\subseteq V$ we have
\begin{equation}\label{41-eAB}
	\Big|e(A, B)-p\,|A|\,|B|\Big|
	\le \sum_{i=0}^t\sum_{j=0}^t
		\Big|e(A\cap V_i, B\cap V_j)|-p\,|A	\cap V_i|\,|B\cap V_j|\Big|\,.
\end{equation}
Each term on the left-hand side having $i=0$, $j=0$, or $(i, j)\in S$ may be bounded 
from above by $|V_i|\,|V_j|$, so altogether these terms contribute at most
\[
	|V|^2-\bigl(|V|-|V_0|\bigr)^2+|S|\,M^2\le 2\delta\,|V|^2+\delta M^2t^2\le 3\delta\,|V|^2\,.
\]
Owing to the quasirandomness, each of the remaining terms on the right hand side 
of~\eqref{41-eAB} may be estimated as follows:
\begin{align*}
	\Big|e(A\cap V_i, B\cap V_j) &-p\,|A\cap V_i|\,|B\cap V_j|\Big| \\
	&\le \Big|e(A\cap V_i, B\cap V_j)-d_{ij}\,|A\cap V_i|\,|B\cap V_j|\Big|
			+|d_{ij}-p|\,|V_i|\,|V_j| \\ 
	&\le \bigl(\delta+\tfrac\eps 2\bigr)\,|V_i|\,|V_j|
\end{align*}
So taken together these terms amount to at most $(\delta+\tfrac\eps 2\bigr)\,|V|^2$, 
and in view of $\delta\ll\eps$ we finally we arrive at
\[
	\big|e(A, B)-p\,|A|\,|B|\big|\le \bigl(4\delta+\tfrac\eps 2\bigr)\,|V|^2\le \eps\,|V^2|
\]
for arbitrary $A, B\subseteq V$. This proves that $G$ is indeed $(\eps,p)$-quasirandom.
\end{proof}

\section{Proof of Corollary~\ref{thm:K3Nforcing}}\label{sec:forcing}
In this section we deduce Corollary~\ref{thm:K3Nforcing}. The only thing we need to check 
is the following proposition, which combined with Theorem~\ref{thm:2sets} yields the corollary.

\begin{prop}\label{prop:get2sets}
Suppose that a graph $G=(V, E)$ satisfies 
\[
	t(K_3, G)\ge (1-\delta)p^3 \qand \t(C_4^\triag, G)\le (1+\delta)p^{12}
\]
for some $p, \delta\in (0,1]$. Then
\[
	\tri(A, B, V)=p^3\,|A|\,|B|\,|V|\pm 8\delta^{1/4}p^3\,|V|^3
\]
holds for all $A, B\subseteq V$.
\end{prop}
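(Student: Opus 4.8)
The plan is to set up a sequence of Cauchy--Schwarz inequalities that link $t(C_4^\triag,G)$ to quantities of the form $\tri(\cdot,\cdot,\cdot)$, and then to combine the near-equality in these inequalities with the lower bound $t(K_3,G)\ge(1-\delta)p^3$ to force the desired two-sets estimate. Write $n=|V|$. For $u,v\in V$ let $\mathrm{cn}(u,v)$ denote the number of common neighbours of $u$ and $v$; then the number of copies of the ``triangle-on-an-edge'' gadget that sit over the ordered pair $(u,v)$ is exactly $\mathrm{cn}(u,v)$, provided $uv$ is an edge. More precisely, $C_4^\triag$ is built from a $4$-cycle $u_1u_2u_3u_4$ by hanging a triangle on each edge, so
\[
	\hom(C_4^\triag,G)=\sum_{u_1,u_2,u_3,u_4}\prod_{i=1}^{4}\Bigl(\text{\# triangles on the pair }(u_i,u_{i+1})\Bigr),
\]
where the weight attached to the pair $(u_i,u_{i+1})$ is $a(u_i,u_{i+1})\mathrm{cn}(u_i,u_{i+1})$ with $a(\cdot,\cdot)$ the adjacency indicator. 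The first step is to regroup this sum as a sum over the ``diagonal'' pairs $(u_1,u_3)$ and $(u_2,u_4)$ and apply Cauchy--Schwarz twice, exactly in the spirit of the classical $t(C_4,G)\ge t(K_2,G)^4$ argument, to obtain
\[
	t(C_4^\triag,G)\ \ge\ \Bigl(\tfrac{1}{n^3}\sum_{u,v,w}a(u,w)a(v,w)\,\mathrm{cn}(u,w)\,\mathrm{cn}(v,w)\Bigr)^{\!2}\!\Big/\ (\cdots)\ \ge\ t(K_3,G)^4,
\]
so that the two hypotheses pinch all intermediate Cauchy--Schwarz steps to within a multiplicative $(1\pm O(\delta))$ factor of equality.

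Next I would convert this near-equality into an $L^2$-type statement. The standard fact is that if $\sum x_i^2\cdot N$ is within $(1\pm\delta)$ of $(\sum x_i)^2$ (with $N$ terms), then $x_i$ is within $O(\delta^{1/2})\cdot\overline{x}$ of the mean $\overline{x}$ in an $L^2$ sense, i.e.\ $\sum_i(x_i-\overline x)^2\le O(\delta)\,N\,\overline x^{\,2}$. Applying this to the two Cauchy--Schwarz steps above, with $x$ running over appropriate vertex-indexed or pair-indexed quantities, yields that the function
\[
	\phi(u,v)\ :=\ \frac{1}{n}\sum_{w\in V} a(u,w)\,a(v,w)\,\mathrm{cn}(u,w)\,\mathrm{cn}(v,w)
\]
is, in $L^2$ over all pairs $(u,v)$, close to a constant, and then that the ``codegree-weighted'' object $\sum_w a(u,w)a(v,w)\,\mathrm{cn}(u,w)\,\mathrm{cn}(v,w)$ is in $L^2$ close to $p^{?}n^{?}$; tracking the exponents against $t(K_3,G)\ge(1-\delta)p^3$ pins the constant down to $p^{6}$ (each of the two codegrees contributes $p^2$ and the remaining edge contributes $p^{2}$ after a further expansion). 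Unwinding $\mathrm{cn}(u,w)=\sum_{x} a(u,x)a(w,x)$, this $L^2$ bound says that, for all but a $O(\delta^{1/2})$-fraction of the relevant tuples, the count of appropriate configurations equals the expected value. The cleanest way to extract the stated two-sets bound from here is: for $A,B\subseteq V$,
\[
	\tri(A,B,V)\ =\ \sum_{w\in V}\ \Bigl(\text{\# edges from }A\text{ to }N(w)\Bigr)\cdot\mathbf 1[\text{incidences}]\ \text{-type expression},
\]
but more directly, write $\tri(A,B,V)=\sum_{a\in A,\,b\in B} a(a,b)\,\mathrm{cn}(a,b)$, and bound the deviation $\bigl|\tri(A,B,V)-p^3|A||B|n\bigr|$ by a Cauchy--Schwarz against the $L^2$-closeness of $a(a,b)\mathrm{cn}(a,b)$ to its mean $p^3 n$, using $|A|,|B|\le n$. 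This is where the exponent $1/4$ and the constant $8p^3$ in the statement come from: a single Cauchy--Schwarz turns an $L^2$ bound with error $O(\delta)$ into a pointwise/averaged bound with error $O(\delta^{1/2})$, and the chain of two such steps (from $C_4^\triag$ down to $K_3$, then from the $L^2$ statement to the two-sets statement over $A,B$) costs a further square root, giving $\delta^{1/4}$.

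The main obstacle I expect is bookkeeping the Cauchy--Schwarz chain so that the lower bound $t(K_3,G)\ge(1-\delta)p^3$ is inserted at exactly the right place: one must be careful that the intermediate quantity being squared is itself $(1\pm\delta)p^{6}$ (not merely $\ge p^6$), which requires combining the upper bound on $t(C_4^\triag,G)$ with the lower bound on $t(K_3,G)$ to sandwich it, rather than just chaining inequalities in one direction. A secondary technical point is handling the ``degenerate'' tuples (where vertices coincide, or where $\mathrm{cn}$ is atypically small), which contribute $O(1/n)$ and are harmless but must be acknowledged; since the final constant $8$ is generous, these can be absorbed. Once the $L^2$-closeness of $(a,b)\mapsto a(a,b)\,\mathrm{cn}(a,b)$ to the constant $p^3 n$ is established with error $O(\delta^{1/2})\,p^{6}n^{4}$ summed over pairs, the passage to arbitrary $A,B$ is one clean application of Cauchy--Schwarz and yields the claimed bound.
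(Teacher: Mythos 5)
The overall skeleton of your proposal is right and matches the paper: view $C_4^\triag$ as a $C_4$ in the edge-weighted graph $(u,v)\mapsto a(u,v)\,\mathrm{cn}(u,v)$, split it along a diagonal into two copies of the ``bowtie'' $S$ (two triangles sharing a vertex), and pinch the resulting Cauchy--Schwarz chain $\t(K_3,G)^4\le \t(S,G)^2\le \t(C_4^\triag,G)$ between the two hypotheses. The paper records exactly the moment identities you write down, namely $\sum_x T_x=\hom(K_3,G)$, $\sum_x T_x^2=\sum_{(u,v)}S_{u,v}=\hom(S,G)$, and $\sum_{(u,v)}S_{u,v}^2=\hom(C_4^\triag,G)$, where $T_x=\sum_y a(x,y)\,\mathrm{cn}(x,y)$ and $S_{u,v}=\sum_w a(u,w)a(v,w)\,\mathrm{cn}(u,w)\,\mathrm{cn}(v,w)$.

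However, the final step of your argument contains a genuine gap. You assert that $(u,v)\mapsto a(u,v)\,\mathrm{cn}(u,v)$ is $L^2$-close to the constant $p^3 n$, i.e.\ that $\sum_{u,v}\bigl(a(u,v)\,\mathrm{cn}(u,v)-p^3 n\bigr)^2=O(\delta^{1/2})p^6n^4$, and then finish with one Cauchy--Schwarz over $A\times B$. This $L^2$ bound is false, and not because of degeneracies: even for the random graph $G(n,p)$, which satisfies the hypotheses for small $\delta$, the quantity $a(u,v)\,\mathrm{cn}(u,v)$ vanishes on a $(1-p)$-fraction of pairs and is $\approx p^2n$ on the remaining $p$-fraction, so $\sum_{u,v}\bigl(a(u,v)\,\mathrm{cn}(u,v)-p^3n\bigr)^2\approx p^5(1-p)\,n^4$, which is $\Theta_p(n^4)$ independently of $\delta$. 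What the hypotheses actually pinch is the $L^2$-closeness of the \emph{marginals} $T_x$ to $p^3n^2$ and of $S_{u,v}$ to $p^6n^3$; passing from these to $L^2$-closeness of the entries $a(u,v)\,\mathrm{cn}(u,v)$ is an unjustified (and false) leap. The fix is the route you mention and then abandon: for the fixed set $A$, consider the one-variable function $y\mapsto\tri(A,y,V)=\sum_{a\in A}a(a,y)\,\mathrm{cn}(a,y)$, observe that $\sum_{y\in V}\tri(A,y,V)=\tri(A,V,V)$ is controlled via the $T_x$-moments and $\sum_{y\in V}\tri(A,y,V)^2=\sum_{(u,v)\in A^2}S_{u,v}$ is controlled from above via the $S$-moments, and apply the Cauchy--Schwarz near-equality lemma in the variable $y$ to recover $\tri(A,B,V)=\sum_{y\in B}\tri(A,y,V)$. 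This sidesteps any pointwise control of $a(u,v)\,\mathrm{cn}(u,v)$ and yields exactly the stated bound.
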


Besides the Cauchy--Schwarz inequality itself the proof will also use the following known 
and easy to confirm result on situations where equality almost holds.

\begin{fact}\label{CSI=}
Let $x_1, \ldots, x_n$, $\alpha$, and $\nu$ denote any real numbers satisfying
\[
	\sum_{i=1}^n x_i=\alpha n 
	\qand
	\sum_{i=1}^n x^2_i=(\alpha^2+\nu^2)n\,.
\]
Then we have $\sum_{i=1}^m x_i=\alpha m\pm \nu n$ for any $m=0,1,\ldots, n$.\qed
\end{fact}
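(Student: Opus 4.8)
The plan is to reduce to the mean-zero case and then invoke the Cauchy--Schwarz inequality in its crudest form. Concretely, I would set $y_i = x_i - \alpha$ for $i\in\{1,\dots,n\}$. Expanding the two hypotheses, the first gives $\sum_{i=1}^n y_i = 0$, and the second then simplifies to $\sum_{i=1}^n y_i^2 = \nu^2 n$, since the cross term $-2\alpha\sum_{i=1}^n x_i = -2\alpha^2 n$ together with the constant $n\alpha^2$ exactly cancels the $\alpha^2 n$ coming from $\sum x_i^2$. In these centred variables the desired conclusion $\sum_{i=1}^m x_i = \alpha m \pm \nu n$ is equivalent to the bound $\bigl|\sum_{i=1}^m y_i\bigr| \le |\nu|\, n$.

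For the latter I would simply apply the Cauchy--Schwarz inequality to the truncated sum, namely $\bigl(\sum_{i=1}^m y_i\bigr)^2 \le m\sum_{i=1}^m y_i^2 \le m\sum_{i=1}^n y_i^2 = m\nu^2 n \le n^2\nu^2$, where the last inequality uses $m\le n$. Taking square roots yields the claim, and the boundary cases $m=0$ and $m=n$ are immediate (for $m=n$ the conclusion even holds with equality, and for $m=0$ both sides vanish).

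There is essentially no obstacle here; the only point worth flagging is that one must centre the variables by $\alpha$ before applying Cauchy--Schwarz, as otherwise the variance information encoded in $\nu$ is not exposed cleanly. I would also note in passing that a factor $\tfrac12$ is available for free — writing $\sum_{i=1}^m y_i = -\sum_{i=m+1}^n y_i$ and applying Cauchy--Schwarz to both halves gives $\bigl|\sum_{i=1}^m y_i\bigr| \le |\nu|\sqrt{m(n-m)} \le |\nu|\,n/2$ — but since the weaker estimate $\pm\nu n$ is all that the later arguments require, the one-line bound above already suffices, so I would keep the proof as short as possible.
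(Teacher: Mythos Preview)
Your argument is correct: centring by $\alpha$ and applying Cauchy--Schwarz to the partial sum is exactly the intended one-line verification, and the paper itself omits the proof entirely (the fact is stated with a \qed and described as ``known and easy to confirm''). Your remark about the sharper bound $|\nu|\sqrt{m(n-m)}\le |\nu|n/2$ is a nice aside but, as you say, unnecessary for the application.
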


\begin{proof}[Proof of Proposition~\ref{prop:get2sets}]
Let $G=(V,E)$, $p$, and $\delta$ be as in the hypothesis and let $A, B\subseteq V$ be arbitrary. 

\parpic[l]{\includegraphics[width=2.2cm]{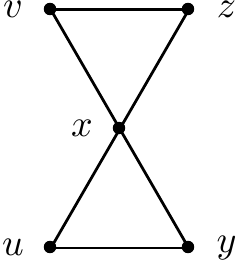}} 
We begin the proof by setting up some notation: Let $S$ denote the graph obtained by gluing two 
triangles together at a vertex (see the picture on the left). We consider the homomorphism densities of 
$K_3$, $S$, and $C_4^\triag$ and 
define the the real numbers $a$, $b$, and $c$ so as to obey
\begin{align}
	\hom(K_3, G)&=ap^3n^3\,,  \label{eq:countK3} \\ 
	\hom(S , G)&=bp^6n^5\,,  \label{eq:countS} \\
	\intertext{and} 
	\quad \hom(C_4^\triag, G)&=cp^{12}n^8\,, \label{eq:countN}
\end{align}
where $n=|V|$. Notice that the assumption translates to $a\ge 1-\delta$ and $c\le 1+\delta$. 
Given a vertex $x\in V$ we write $T_x$ for the number of pairs $(y,z)\in V^2$ 
such that $xyz$ is a triangle in~$G$. Moreover, for any two 
vertices $u, v\in V$ we denote the number of triples $(x,y,z)\in V^3$ with
$ux, uy, vx, vz, xy, xz\in E$ by $S_{u,v}$ (see figure). In terms of these numbers the 
equations~\eqref{eq:countK3}, \eqref{eq:countS}, and \eqref{eq:countN} rewrite as
\begin{align}     
	\sum_{x\in V}T_x&=ap^3n^3\,, \label{eq:Tsum} \\
	\sum_{x\in V}T^2_x=\sum_{(u,v)\in V^2}S_{u,v}&=bp^6n^5\,, \label{eq:TSos} \\
	\intertext{and} 
	\quad \sum_{(u,v)\in V^2}S^2_{u,v}&= cp^{12}n^8\,. \label{eq:SSos}
\end{align}
Thus the Cauchy--Schwarz inequality implies
\begin{equation}\label{eq:abc}
	(1-\delta)^4\le a^4\le b^2\le c\le 1+\delta\,.
\end{equation}
Due to Fact~\ref{CSI=} (applied with $\alpha=ap^3n^2$ and $\nu=p^3n^2\sqrt{b-a^2}$), \eqref{eq:Tsum}, and \eqref{eq:TSos} we have
\[
	\tri(A, V, V)=\sum_{x\in A} T_x=ap^3n^2\,|A|\pm \sqrt{b-a^2}\cdot p^3n^3\,.
\]
Owing to  $1-\delta\leq a\le 1+\delta$ and $\sqrt{b-a^2}\le 3\sqrt{\delta}$ (see~\eqref{eq:abc}),
this leads to
\begin{equation}\label{eq:TAVV}
	\tri(A, V, V)=p^3n^2\,|A|\pm 4\delta^{1/2}p^3n^3\,.
\end{equation}
Similarly Fact~\ref{CSI=} (applied with $\alpha=bp^6n^3$ and $\nu=p^6n^3\sqrt{c-b^2}$), \eqref{eq:TSos}, and \eqref{eq:SSos} give
\[
	\sum_{(u,v)\in A^2}S_{u,v}\le bp^6n^3\,|A|^2+\sqrt{c-b^2}\cdot p^6n^5
	\le 
	\bigl((1+\delta)\,|A|^2+3\delta^{1/2}n^2\bigr) p^6n^3\,,
\]
whence
\begin{equation}\label{eq:TAVV2}
	\sum_{y\in V}\tri(A, y, V)^2
	=
	\sum_{(u,v)\in A^2}S_{u,v}
	\le 
	(|A|^2+4\delta^{1/2}n^2)p^6n^3\,.
\end{equation}
Now if $Q$ and $R$ denote the real numbers satisfying
\[
	\sum_{y\in V}\tri(A, y, V)=p^3n^2Q 
	\quad \text{ and } \quad
	\sum_{y\in V}\tri(A, y, V)^2=p^6n^3R\,,
\]
then \eqref{eq:TAVV} and \eqref{eq:TAVV2} entail
\begin{equation}\label{eq:QR}
	Q=|A|\pm 4\delta^{1/2}n
	\quad \text{ and } \quad
	R\le |A|^2+4\delta^{1/2}n^2\,,
\end{equation}
whilst a final application of Fact~\ref{CSI=} (applied with $\alpha=p^3nQ$ and $\nu=p^3n\sqrt{R-Q^2}$) reveals
\begin{equation}\label{eq:TABV}
	\tri(A, B, V)=p^3nQ\,|B|\pm p^3n^2\sqrt{R-Q^2}\,.
\end{equation}
It follows from~\eqref{eq:QR} that
$R-Q^2\le 12\delta^{1/2}n^2$ and
owing to \eqref{eq:TABV} 
we obtain
\begin{multline*}
	\Big|\tri(A, B, V)-p^3\,|A|\,|B|\,n\Big|\le p^3n\,|B|\,\big|Q-|A|\big|+4\delta^{1/4}p^3n^3 \\
	\le p^3n^3(4\delta^{1/2}+4\delta^{1/4})\le 8\delta^{1/4}p^3n^3\,,
\end{multline*}
as desired.
\end{proof}

\section{Proof of Corollary~\ref{thm:Hauptsatz}}\label{sec:ana}

\subsection{Notation} We mostly follow the notation from Lov\'asz's research 
monograph~\cite{Lov12} and in this subsection we remind the reader of what we 
actually need. 
By $\cW$ we mean the space of all bounded symmetric measurable functions from 
the unit square $[0, 1]^2$ to the set of reals.
So $\cW$ is a linear space whose members are sometimes referred to as {\it kernels}. 
It is known that for each kernel $W$ the maximum
\[
	\| W\|_\Box=
	\max\biggl\{ 
		\Big|\int_{A\times B}W(x, y)\;\mathrm{d}x\,\mathrm{d}y\,\Big| \colon
		A, B\subseteq [0, 1] \text{ measurable}
	\biggr\} 
\]
exists and that $W\longmapsto \|W\|_\Box$ is a norm on $\cW$, the so-called
{\it cutnorm}. If~$\|W\|_\Box=0$ holds for $W\in\cW$, then
this kernel vanishes almost everywhere, i.e., the set $\{(x, y)\in [0, 1]^2\colon W(x, y)\ne 0\}$
has measure zero (see~\cite{Lov12}*{Section~8.2.3}). 

The group of measure preserving bijections form the unit interval onto itself is denoted 
by~$\cS_{[0, 1]}$. This group acts in an obvious way on the space of kernels by
\[
	W^\phi(x, y)=W(\phi(x), \phi(y)) 
\]
for all $W\in\cW$, $\phi\in \cS_{[0, 1]}$,  and $x, y\in [0, 1]$.
The {\it cut distance} $\delta_\Box(W_1, W_2)$ between two kernels $W_1$, $W_2\in\cW$ is defined by
\[
	  \delta_\Box(W_1, W_2)
	  =\inf\bigl\{
	  			\|W_1-W_2^\phi\|_\Box \colon \phi\in\cS_{[0, 1]}
			\bigr\}\,.
\]
Actually, this infimum is known to be a minimum, but this fact is rarely needed 
and we shall make no use of it.
 
Those $W\in\cW$ that satisfy $W(x, y)\in [0, 1]$ for all $x, y\in [0, 1]$ are 
called {\it graphons} and the set of all graphons is denoted by $\cW_0$. 
With each graph $G=(V,E)$ we can associate a graphon~$W_G$ by taking an arbitrary partition 
$[0, 1]=\bigdcup_{v\in V}P_v$ of the unit interval into measurable pieces of 
measure $|V|^{-1}$ and defining for all $x, y\in [0, 1]$
\[
	W_G(x, y)=
		\begin{cases}
		1, & \text{ if $uv\in E$, where $x\in P_u$, $y\in P_v$} \cr
		0, & \text{ else.}
		\end{cases}
\]
This graphon depends, of course, not only on $G$ but also on the 
underlying partition, but modulo the action of $\cS_{[0, 1]}$ mentioned above it is uniquely
determined by $G$. 

An important insight due to Lov\'asz and Szegedy~\cite{LoSz} is the compactness of the pseudometric 
space~$(\cW_0, \delta_\Box)$. In fact, the compactness easily implies the regularity lemma for graphs 
(see also~\cite{Lov12}*{Theorem~9.23}). 
This result does actually occupy a central place in the limit theory of dense graphs.
Besides, it is beautifully complemented by the fact that the set 
$\{W_G\colon G \text{ is a graph}\}$ is dense in~$(\cW_0, \delta_\Box)$.

Given a graph $F$ and a kernel $W$ the {\it homomorphism density} $\t(F, W)$ is defined
to be the multidimensional integral
\[
	\t(F, W)=\int_{[0, 1]^{V(F)}}\prod_{uv\in E(F)}W(x_u, x_v)\prod_{u\in V(F)}\mathrm{d}x_u\,.
\]
This stipulation extends the usual definition of homomorphism densities for graphs 
in the sense that $t(F, W_G)=t(F, G)$ holds for all graphs $F$ and $G$.

Analytically speaking, the {\it global counting lemma} asserts that for every graph $F$ the map~$W\longmapsto\t(F, W)$ from~$(\cW_0, \delta_\Box)$ to $[0, 1]$ is Lipschitz continuous 
with Lipschitz constant~$e(F)$ (see~\cite{Lov12}*{Theorem~10.23}).

\subsection{Forcing families}

Let us write $W\equiv p$ for a kernel $W$ and a real number $p$ if $W$ agrees almost 
everywhere with the constant function whose value is always $p$. 

\begin{lemma}\label{lem:folklore}
A pair of graphs $(F_1,F_2)$  is forcing if and only if we have $W\equiv p$ for every real $p\in(0, 1]$ 
and every graphon $W$ with $\t(F_i, W)=p^{e(F_i)}$ for $i=1,2$.
\end{lemma}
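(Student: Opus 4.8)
The plan is to prove the two implications of the equivalence separately, with the forward direction being essentially immediate and the backward direction requiring a compactness argument.

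\medskip

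\emph{Necessity.} Suppose $(F_1,F_2)$ is forcing and fix $p\in(0,1]$. Let $W$ be a graphon with $\t(F_i,W)=p^{e(F_i)}$ for $i=1,2$. First I would note that $W\equiv p$ is equivalent to $\delta_\Box(W,W_p)=0$, where $W_p\equiv p$; indeed, if the cut distance vanishes then, since $W_p$ is invariant under $\cS_{[0,1]}$, we get $\|W-W_p\|_\Box=0$, and a kernel of zero cutnorm vanishes almost everywhere. So it suffices to show $\delta_\Box(W,W_p)\le\eps$ for every $\eps>0$. Given $\eps$, pick $\delta>0$ from the forcing property. Using density of $\{W_G\colon G\text{ a graph}\}$ in $(\cW_0,\delta_\Box)$ together with Lipschitz continuity of $\t(F_i,\cdot)$ (the global counting lemma), choose a graph $G$ with $\delta_\Box(W,W_G)$ small enough that $\t(F_i,G)=\t(F_i,W_G)=p^{e(F_i)}\pm\delta p^{e(F_i)}$ for $i=1,2$; then $G$ satisfies~\eqref{eq:def:forcing} and hence is $(\eps',p)$-quasirandom for a suitable $\eps'$. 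It is standard that $(\eps',p)$-quasirandomness of $G$ forces $\delta_\Box(W_G,W_p)$ to be small (this is just a reformulation of quasirandomness in cutnorm language), and combining with the initial approximation gives $\delta_\Box(W,W_p)\le\eps$. Letting $\eps\to0$ finishes this direction.

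\medskip

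\emph{Sufficiency.} Now suppose every graphon $W$ with $\t(F_i,W)=p^{e(F_i)}$ satisfies $W\equiv p$, and assume toward a contradiction that $(F_1,F_2)$ is not forcing. Then there exist $p\in(0,1]$, $\eps>0$, and a sequence of graphs $(G_n)$ with $|V(G_n)|\to\infty$ satisfying $\t(F_1,G_n)\ge(1-1/n)p^{e(F_1)}$ and $\t(F_2,G_n)\le(1+1/n)p^{e(F_2)}$, yet none of the $G_n$ is $(\eps,p)$-quasirandom. By compactness of $(\cW_0,\delta_\Box)$, pass to a subsequence so that $W_{G_n}\to W$ for some graphon $W$. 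By the global counting lemma, $\t(F_i,W_{G_n})\to\t(F_i,W)$, so the limit inequalities give $\t(F_1,W)\ge p^{e(F_1)}$ and $\t(F_2,W)\le p^{e(F_2)}$. The reverse inequalities $\t(F_1,W)\le p^{e(F_1)}$ and $\t(F_2,W)\ge p^{e(F_2)}$ follow from the defining property of forcing pairs applied to $W_p$ — more precisely, I should check that these are actually the extremal values, which is exactly the content of why~\eqref{eq:def:forcing} is phrased with the inequalities pointing as they do (for a forcing pair one always has $\t(F_1,W)\le p^{e(F_1)}$ and $\t(F_2,W)\ge p^{e(F_2)}$ for $W\equiv p$, and more generally these must be the right directions for the hypothesis to be nonvacuous). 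Hence $\t(F_i,W)=p^{e(F_i)}$ exactly, so by hypothesis $W\equiv p$, i.e.\ $\delta_\Box(W_{G_n},W_p)\to0$. But $\delta_\Box(W_{G_n},W_p)\to0$ is equivalent to $G_n$ being $(\eps_n,p)$-quasirandom with $\eps_n\to0$, contradicting that none of the $G_n$ is $(\eps,p)$-quasirandom.

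\medskip

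The main obstacle is bookkeeping the precise translation between $(\eps,p)$-quasirandomness of a finite graph $G$ and smallness of $\delta_\Box(W_G,W_p)$: one needs both directions with explicit (polynomial) quantitative control, which is where care is required but where the argument is entirely routine and well documented in~\cite{Lov12}. A secondary point to be careful about is justifying that for a forcing pair the densities $\t(F_i,W)$ are pinned between the two one-sided bounds, rather than just bounded on one side — this is what allows the $\le$ and $\ge$ from the limit to be upgraded to equalities before invoking the hypothesis.
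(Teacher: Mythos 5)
The paper itself gives no proof here, only a pointer to~\cite{Lov12}*{Section~16.7.1}. Your necessity direction is sound: approximate $W$ by a graph $G$ close enough in cut distance, use the Lipschitz continuity of $\t(F_i,\cdot)$ to put $\t(F_i,G)$ within $\delta$ of $p^{e(F_i)}$, invoke forcing, and translate $(\eps',p)$-quasirandomness of $G$ into smallness of $\|W_G - p\|_\Box$.

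The sufficiency direction has a genuine gap exactly where you flagged it. From the convergent subsequence you correctly obtain only $\t(F_1,W)\geq p^{e(F_1)}$ and $\t(F_2,W)\leq p^{e(F_2)}$, and you then assert that the reverse inequalities ``follow from the defining property of forcing pairs.'' But in this direction you have assumed, for contradiction, that $(F_1,F_2)$ is \emph{not} forcing, so no such property is available --- the appeal is circular. The parenthetical fallback (that for $W\equiv p$ one has these bounds with equality, and that the inequalities ``must point the right way for nonvacuousness'') is, respectively, a tautology about the constant graphon and a heuristic; neither applies to the possibly non-constant limit $W$. What you actually need is that the exact-equality graphon hypothesis, holding for all $p'\in(0,1]$, already implies the Sidorenko-type inequality $\t(F_2,W')^{e(F_1)}\geq\t(F_1,W')^{e(F_2)}$ for \emph{every} graphon $W'$; that would sandwich the limit densities into equalities and close your argument. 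This implication is the real missing content, and it is not immediate: under the natural one-parameter deformations (scaling $W'\mapsto\lambda W'$, or tensoring with a constant graphon) the quantity $\t(F_1,\cdot)^{e(F_2)} - \t(F_2,\cdot)^{e(F_1)}$ merely rescales and never changes sign, so no off-the-shelf intermediate-value argument produces a non-constant graphon with densities exactly on the curve $\{(q^{e(F_1)},q^{e(F_2)})\colon q\in[0,1]\}$.
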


\begin{proof}
This is implicit in the discussion from~\cite{Lov12}*{Section~16.7.1}. 
\end{proof}

Theorem~\ref{thm:2sets} is the discrete analogue of the following statement. 

\begin{thm}\label{thm:2sets-ana}
If a graphon $W$ and a real $p\in(0, 1]$ are such that 
\[
	W(x, y)\int_0^1 W(x, z)W(y, z)\,\mathrm{d}z=p^3
\]
holds for almost all $(x, y)\in [0, 1]^2$, then $W\equiv p$.
\end{thm}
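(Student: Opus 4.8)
The plan is to mimic the discrete argument of Lemma~\ref{lem:2sets-reduced} in the continuous setting, replacing the sum over indices by an integral and the maximisation of $d_{xy}-d_{xz}$ by an essential-supremum argument. First I would observe that, since the inner integral $\int_0^1 W(x,z)W(y,z)\,\mathrm{d}z$ is at most $1$, the hypothesis forces $W(x,y)\ge p^3$ for almost all $(x,y)$; in particular $W$ is bounded away from $0$ almost everywhere. Next, set $\sigma=\operatorname*{ess\,sup}_{x,y,y'}\bigl(W(x,y)-W(x,y')\bigr)$, the continuous analogue of $a-b$ in the lemma; the goal is to show $\sigma=0$, since then $W$ agrees a.e.\ with a function of one variable only, and symmetry together with the defining equation (which then reads $W(x,y)\cdot(\text{const})=p^3$) pins $W$ down to the constant $p$.

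To bound $\sigma$, I would pick, for each small $\kappa>0$, points $x,y,z$ (outside a null set) with $a:=W(x,y)$ and $b:=W(x,z)$ satisfying $a-b>\sigma-\kappa$. Near-maximality gives $W(y,\cdot)\ge b$ and $W(z,\cdot)\le a$ almost everywhere, exactly as in~\eqref{eq:diyz}. Applying the hypothesis at $(x,y)$ and at $(x,z)$ and subtracting yields $\int_0^1 W(x,w)\bigl(aW(y,w)-bW(z,w)\bigr)\,\mathrm{d}w=0$; since the integrand is nonnegative (each factor $a(W(y,w)-b)+b(a-W(z,w))$ is $\ge 0$ and $W(x,w)\ge p^3>0$), we get
\[
	\int_0^1 \bigl(W(y,w)-b\bigr)\,\mathrm{d}w+\int_0^1\bigl(a-W(z,w)\bigr)\,\mathrm{d}w=0,
\]
with both integrands nonnegative. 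Hence $W(y,w)=b$ and $W(z,w)=a$ for almost all $w$. Now repeat the subtraction trick with the pair $(y,z)$ in place of $(x,\cdot)$: applying the hypothesis at $(w,y)$ and $(w,z)$ for a suitable $w$ and using that $W(y,\cdot)$ and $W(z,\cdot)$ are (a.e.) the constants $b$ and $a$ respectively, one extracts the relation $a^2=b^2$, whence $a=b$ (both are positive), i.e.\ $\sigma-\kappa<a-b=0$. Letting $\kappa\to 0$ gives $\sigma\le 0$, and by symmetry of $W$ the essential oscillation of $W$ in either variable is zero.

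The main obstacle I anticipate is the careful handling of null sets: the discrete proof freely chooses ``the'' maximising index $w$, but in the measurable setting one must argue that the sets on which the various a.e.-inequalities hold have full measure and intersect, so that a genuine choice of $x,y,z,w$ with all the required properties exists. Concretely, after deriving $\int (W(y,w)-b)\,\mathrm{d}w=0$ with a nonnegative integrand I know $W(y,w)=b$ for a.e.\ $w$, and I may then pick such a $w$ lying also in the full-measure set where the hypothesis and the bound $W(\cdot,w)\ge p^3$ hold; Fubini guarantees these intersections are nonempty. Once $\sigma=0$ is established, the conclusion $W\equiv p$ is immediate: $W(x,y)$ equals some constant $q$ a.e., the defining equation becomes $q\cdot q^2=p^3$, so $q=p$. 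I would also double-check that the Lipschitz/counting machinery of Section~\ref{sec:ana} is \emph{not} needed here --- this is a purely pointwise argument about the kernel --- and that the result is exactly what feeds, via Lemma~\ref{lem:folklore}, into the proof of Corollary~\ref{thm:Hauptsatz}.
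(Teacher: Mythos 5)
Your approach is a genuinely different route from the one the paper takes. The paper's own proof of Theorem~\ref{thm:2sets-ana} is a three-line deduction: it sets $U(x,y)=W(x,y)\int_0^1 W(x,z)W(y,z)\,\mathrm{d}z$, observes that the hypothesis $U\equiv p^3$ gives $t(K_3,W)=t(K_2,U)=p^3$ and $t(C_4^\triag,W)=t(C_4,U)=p^{12}$, and then invokes the already-proved fact that $(K_3,C_4^\triag)$ is forcing (Corollary~\ref{thm:K3Nforcing}) via Lemma~\ref{lem:folklore}. Your proposal instead reruns the variational argument of Lemma~\ref{lem:2sets-reduced} in the analytic setting; the paper explicitly names this as an alternative (``carefully repeating the proof of Lemma~\ref{lem:2sets-reduced} in this analytical setting'') and calls it ``not hard, but somewhat technical.'' Your route is self-contained and does not pass through the regularity lemma or Corollary~\ref{thm:K3Nforcing}, at the cost of measure-theoretic bookkeeping that the paper's argument avoids entirely.

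One point where the write-up as given is not quite correct: the step ``near-maximality gives $W(y,\cdot)\ge b$ and $W(z,\cdot)\le a$ almost everywhere, exactly as in~\eqref{eq:diyz}'' does not follow. In Lemma~\ref{lem:2sets-reduced} those inequalities are exact because the maximum of $d_{xy}-d_{xz}$ is genuinely attained over a finite index set; an essential supremum need not be attained, and from $a-b>\sigma-\kappa$ one can only infer $W(y,w)>b-\kappa$ and $W(z,w)<a+\kappa$ for a.e.~$w$. Consequently the integrand $a\bigl(W(y,w)-b\bigr)+b\bigl(a-W(z,w)\bigr)$ is bounded below only by $-2\kappa$, and the subsequent conclusion ``$W(y,w)=b$ and $W(z,w)=a$ for almost all $w$'' must be weakened to an $L^1$-estimate $\int|W(y,w)-b|\,\mathrm{d}w+\int|a-W(z,w)|\,\mathrm{d}w=O(p^{-6}\kappa)$. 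Carrying this through your second subtraction (after a Markov step to find a suitable $w$) gives $a-b=O\bigl(p^{-6}\sqrt{\kappa}\bigr)$ rather than $a=b$. Your plan to let $\kappa\to 0$ at the end does close the argument, so this is a fixable imprecision rather than a fatal flaw, but the intermediate exact equalities you state do not hold and the $\kappa$-dependence has to be tracked explicitly --- this is precisely the ``somewhat technical'' part the paper chose to sidestep.
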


One way to show this proceeds by carefully repeating the proof of 
Lemma~\ref{lem:2sets-reduced} in this analytical setting. This is 
not hard, but somewhat technical, and hence we would like to present 
an alternative argument here. 

\begin{proof}[Proof of Theorem~\ref{thm:2sets-ana}]
Define a graphon $U$ by
\begin{equation}\label{eq:dfn-U}
	U(x, y)=W(x, y)\int_0^1 W(x, z)W(y, z)\,\mathrm{d}z
\end{equation}
for all $x, y\in [0, 1]$. Now the assumption $U\equiv p^3$ 
leads to $t(K_3, W)=t(K_2, U)=p^3$ and $t(C_4^\triag, W)=t(C_4, U)=p^{12}$.
Since the pair $(K_3, C_4^\triag)$ is forcing, it follows by Lemma~\ref{lem:folklore}
that we have indeed $W\equiv p$.
\end{proof}

We conclude this section with the proof of our main result on forcing pairs involving triangles.

\begin{proof}[Proof of Corollary~\ref{thm:Hauptsatz}]
Suppose that $W$ is a graphon and $p\in (0, 1]$ is a real number such that 
\begin{equation}\label{eq:hypothesis}
	\t(K_3, W)=p^3 \qand \t(F^\triag, W)=p^{3e(F)}\,.
\end{equation}
In view of Lemma~\ref{lem:folklore} we have to prove that $W\equiv p$. To this end, 
we look again at the graphon $U$ defined by~\eqref{eq:dfn-U}.
The hypothesis~\eqref{eq:hypothesis} rewrites in terms of $U$ as
\[
	\t(K_2, U)=p^3 \qand \t(F, U)=p^{3e(F)}\,.
\]
Since the pair $(K_2, F)$ is forcing, it follows that $U\equiv p^3$,
and in the light of Theorem~\ref{thm:2sets-ana} we get indeed $W\equiv p$.
\end{proof}

\section{Concluding Remarks}\label{sec:conc}
We close with a few remarks and open problems for future research.
\begin{itemize}
\item Corollary~\ref{thm:Hauptsatz} raises the general problem to characterise 
all graphs~$F$ with the property that the pair $(K_3, F)$ is forcing. However, given our current state of
knowledge and the fact that this is still open for $(K_2,F)$ 
it appears unclear how to even formulate a plausible 
conjecture in this regard.

\item The proof of Theorem~\ref{thm:2sets} presented in Section~\ref{sec:2sets} is based on 
Szemer\'edi's regularity lemma and as a consequence this proof requires 
that~$\eta^{-1}$ behaves like an exponential tower of height $\textrm{poly}(\eps^{-1},p^{-1})$. 
We would like to
thank L.~M.~Lov\'asz for pointing out to us that a different argument utilising the 
Frieze--Kannan regularity lemma~\cite{FK99} shows that $\eta^{-1}=2^{(\eps p)^{-\Theta(1)}}$
would suffice as well. To see this one exploits that the assumption of Theorem~\ref{thm:2sets}
implies $t(K_3, G)\approx p^3$ and $t(C_4^\triag, G)\approx p^{12}$. Due to the global counting lemma
this gives us two approximate equalities for the densities~$d_{ij}$ arising in a Frieze--Kannan 
regular partition of $G$. As in the proof of Proposition~\ref{prop:get2sets}
two reverse applications of the Cauchy--Schwarz inequality then entail that the 
assumption~\eqref{eq:dijk} of Lemma~\ref{lem:2sets-reduced} holds with at most $o(t^2)$ 
exceptions. Working a little bit harder in the proof of this lemma it can then be shown that 
this is enough to imply that there are at most $o(t^2)$ pairs $(i, j)\in [t]^2$ for which 
$d_{ij}\approx p$ fails. This, however, is in turn equivalent to $G$ being $p$-quasirandom.

Recently, it was shown by Conlon, Fox, and Sudakov~\cite{CFS}
that the corresponding dependency of the parameters in the Simonovits--S\'os theorem for the triangle is in fact linear 
(see also~\cite{He,RSch} for further results). In view of these results, it seems an interesting open question 
whether Theorem~\ref{thm:2sets} holds also for~$\eta=\textrm{poly}(\eps,p)$.

\item It appears to be an intriguing open problem to find the appropriate generalisation of 
Theorem~\ref{thm:2sets} (and Corollary~\ref{thm:K3Nforcing})
for graphs other than the triangle. At this point even for cliques $K_k$ with $k\geq 4$ 
this is an open problem. For integers $1\leq \l\leq k$ we say~\emph{$K_k$ is $\l$-forcing}, if every graph 
$G=(V,E)$ satisfying for all subsets~$X_1,\dots,X_\l\subseteq V$
\[
	\cK_k(X_1,\dots,X_\l)=p^{\binom{k}{2}}|V|^{k-\l}\prod_{i=1}^{\l}|X_i|+o(|V|^k)
\]
for some $p\in(0,1]$ is $p$-quasirandom, where $\cK_k(X_1,\dots,X_\l)$ denotes the number of $k$-tuples 
\[
	(v_1,\dots,v_k)\in X_1\times\dots\times X_\l\times V^{k-\l}
\] 
that span a $K_k$ in $G$. The Simonovits--S\'os theorem implies for every $k\geq 2$ that~$K_k$ is $k$-forcing and it is not hard to show that no clique is $1$-forcing.
Theorem~\ref{thm:2sets} tells us that $K_3$ is $2$-forcing and it would be interesting to 
determine for every $k\geq 4$ the smallest $\l$ such that $K_k$ is $\l$-forcing. 
The proof of Theorem~\ref{thm:2sets} can be adjusted to show that $\l=\lceil\frac{k+1}{2}\rceil$
suffices and this was also noted independently by Hubai et al.~\cite{HKPP}.
Currently, we are not aware of any reason that rules out the possibility that every clique $K_k$
is $2$-forcing or that there is a universal bound independent of~$k$.

\item One may also consider hypergraphs extensions of those results. For example, one may investigate, whether
the tetrahedron $K_4^{(3)}$ is $3$-forcing for the notion of quasirandomness investigated in~\cite{CG90,KRS02}.

\end{itemize}

\begin{bibdiv}
\begin{biblist}

\bib{ACHPS}{article}{
   author={Aigner-Horev, E.},
   author={Conlon, David},
   author={H\`an, H.},
   author={Person, Y.},
   author={Schacht, M.},
   title={Quasirandomness in hypergraphs},
   journal={Electron. J. Combin.},
   volume={25},
   date={2018},
   number={3},
   pages={Paper 3.34, 22 pages},
   issn={1077-8926},
   review={\MR{3853886}},
}

\bib{Al86}{article}{
   author={Alon, N.},
   title={Eigenvalues and expanders},
   note={Theory of computing (Singer Island, Fla., 1984)},
   journal={Combinatorica},
   volume={6},
   date={1986},
   number={2},
   pages={83--96},
   issn={0209-9683},
   review={\MR{875835}},
   doi={10.1007/BF02579166},
}

\bib{AlCh88}{article}{
   author={Alon, N.},
   author={Chung, F. R. K.},
   title={Explicit construction of linear sized tolerant networks},
   booktitle={Proceedings of the First Japan Conference on Graph Theory and
   Applications (Hakone, 1986)},
   journal={Discrete Math.},
   volume={72},
   date={1988},
   number={1-3},
   pages={15--19},
   issn={0012-365X},
   review={\MR{975519}},
   doi={10.1016/0012-365X(88)90189-6},
}

\bib{Ch90}{article}{
   author={Chung, Fan R. K.},
   title={Quasi-random classes of hypergraphs},
   journal={Random Structures Algorithms},
   volume={1},
   date={1990},
   number={4},
   pages={363--382},
   issn={1042-9832},
   review={\MR{1138430}},
   doi={10.1002/rsa.3240010401},
}	

\bib{Ch12}{article}{
   author={Chung, Fan R. K.},
   title={Quasi-random hypergraphs revisited},
   journal={Random Structures Algorithms},
   volume={40},
   date={2012},
   number={1},
   pages={39--48},
   issn={1042-9832},
   review={\MR{2864651}},
   doi={10.1002/rsa.20388},
}

\bib{CG90}{article}{
   author={Chung, F. R. K.},
   author={Graham, R. L.},
   title={Quasi-random hypergraphs},
   journal={Random Structures Algorithms},
   volume={1},
   date={1990},
   number={1},
   pages={105--124},
   issn={1042-9832},
   review={\MR{1068494}},
   doi={10.1002/rsa.3240010108},
}
	
\bib{CG91}{article}{
   author={Chung, F. R. K.},
   author={Graham, R. L.},
   title={Quasi-random tournaments},
   journal={J. Graph Theory},
   volume={15},
   date={1991},
   number={2},
   pages={173--198},
   issn={0364-9024},
   review={\MR{1106530}},
   doi={10.1002/jgt.3190150206},
}		

\bib{CG91b}{article}{
   author={Chung, F. R. K.},
   author={Graham, R. L.},
   title={Quasi-random set systems},
   journal={J. Amer. Math. Soc.},
   volume={4},
   date={1991},
   number={1},
   pages={151--196},
   issn={0894-0347},
   review={\MR{1077279}},
   doi={10.2307/2939258},
}

\bib{CG92a}{article}{
   author={Chung, F. R. K.},
   author={Graham, R. L.},
   title={Maximum cuts and quasirandom graphs},
   conference={
      title={Random graphs, Vol.\ 2},
      address={Pozna\'n},
      date={1989},
   },
   book={
      series={Wiley-Intersci. Publ.},
      publisher={Wiley, New York},
   },
   date={1992},
   pages={23--33},
   review={\MR{1166604}},
}	

\bib{CG92}{article}{
   author={Chung, F. R. K.},
   author={Graham, R. L.},
   title={Quasi-random subsets of $Z_n$},
   journal={J. Combin. Theory Ser. A},
   volume={61},
   date={1992},
   number={1},
   pages={64--86},
   issn={0097-3165},
   review={\MR{1178385}},
   doi={10.1016/0097-3165(92)90053-W},
}

\bib{CG02}{article}{
   author={Chung, F. R. K.},
   author={Graham, R. L.},
   title={Sparse quasi-random graphs},
   note={Special issue: Paul Erd\H os and his mathematics},
   journal={Combinatorica},
   volume={22},
   date={2002},
   number={2},
   pages={217--244},
   issn={0209-9683},
   review={\MR{1909084}},
   doi={10.1007/s004930200010},
}

\bib{CGW89}{article}{
   author={Chung, F. R. K.},
   author={Graham, R. L.},
   author={Wilson, R. M.},
   title={Quasi-random graphs},
   journal={Combinatorica},
   volume={9},
   date={1989},
   number={4},
   pages={345--362},
   issn={0209-9683},
   review={\MR{1054011 (91e:05074)}},
   doi={10.1007/BF02125347},
}

\bib{CFS}{article}{
   author={Conlon, David},
   author={Fox, Jacob},
   author={Sudakov, Benny},
   title={Hereditary quasirandomness without regularity},
   journal={Math. Proc. Cambridge Philos. Soc.},
   volume={164},
   date={2018},
   number={3},
   pages={385--399},
   issn={0305-0041},
   review={\MR{3784260}},
   doi={10.1017/S0305004116001055},
}

\bib{CFS10}{article}{
   author={Conlon, David},
   author={Fox, Jacob},
   author={Sudakov, Benny},
   title={An approximate version of Sidorenko's conjecture},
   journal={Geom. Funct. Anal.},
   volume={20},
   date={2010},
   number={6},
   pages={1354--1366},
   issn={1016-443X},
   review={\MR{2738996}},
}

\bib{CFZ14}{article}{
   author={Conlon, David},
   author={Fox, Jacob},
   author={Zhao, Yufei},
   title={Extremal results in sparse pseudorandom graphs},
   journal={Adv. Math.},
   volume={256},
   date={2014},
   pages={206--290},
   issn={0001-8708},
   review={\MR{3177293}},
   doi={10.1016/j.aim.2013.12.004},
}

\bib{CHPS}{article}{
   author={Conlon, David},
   author={H{\`a}n, Hi{\^e}p},
   author={Person, Yury},
   author={Schacht, Mathias},
   title={Weak quasi-randomness for uniform hypergraphs},
   journal={Random Structures Algorithms},
   volume={40},
   date={2012},
   number={1},
   pages={1--38},
   issn={1042-9832},
   review={\MR{2864650}},
   doi={10.1002/rsa.20389},
}

\bib{CKLL}{article}{
   author={Conlon, David},
   author={Kim, Jeong Han},
   author={Lee, Choongbum},
   author={Lee, Joonkyung},
   title={Some advances on Sidorenko's conjecture},
   journal={J. Lond. Math. Soc.~(2)}
note={To appear},
}

\bib{CoLe17}{article}{
   author={Conlon, David},
   author={Lee, Joonkyung},
   title={Finite reflection groups and graph norms},
   journal={Adv. Math.},
   volume={315},
   date={2017},
   pages={130--165},
   issn={0001-8708},
   review={\MR{3667583}},
   doi={10.1016/j.aim.2017.05.009},
}

\bib{Co04}{article}{
   author={Cooper, Joshua N.},
   title={Quasirandom permutations},
   journal={J. Combin. Theory Ser. A},
   volume={106},
   date={2004},
   number={1},
   pages={123--143},
   issn={0097-3165},
   review={\MR{2050120}},
   doi={10.1016/j.jcta.2004.01.006},
}

\bib{FRW88}{article}{
   author={Frankl, P.},
   author={R\"odl, V.},
   author={Wilson, R. M.},
   title={The number of submatrices of a given type in a Hadamard matrix and
   related results},
   journal={J. Combin. Theory Ser. B},
   volume={44},
   date={1988},
   number={3},
   pages={317--328},
   issn={0095-8956},
   review={\MR{941440}},
   doi={10.1016/0095-8956(88)90040-8},
}

\bib{FK99}{article}{
   author={Frieze, Alan},
   author={Kannan, Ravi},
   title={Quick approximation to matrices and applications},
   journal={Combinatorica},
   volume={19},
   date={1999},
   number={2},
   pages={175--220},
   issn={0209-9683},
   review={\MR{1723039}},
   doi={10.1007/s004930050052},
}

\bib{Gow08}{article}{
   author={Gowers, W. T.},
   title={Quasirandom groups},
   journal={Combin. Probab. Comput.},
   volume={17},
   date={2008},
   number={3},
   pages={363--387},
   issn={0963-5483},
   review={\MR{2410393}},
   doi={10.1017/S0963548307008826},
}

\bib{Gr13}{article}{
   author={Griffiths, Simon},
   title={Quasi-random oriented graphs},
   journal={J. Graph Theory},
   volume={74},
   date={2013},
   number={2},
   pages={198--209},
   issn={0364-9024},
   review={\MR{3090716}},
   doi={10.1002/jgt.21701},
}

\bib{HPS11}{article}{
   author={H\`an, H.},
   author={Person, Y.},
   author={Schacht, M.},
   title={Note on forcing pairs},
   conference={
      title={The Sixth European Conference on Combinatorics, Graph Theory and
      Applications (EuroComb 2011)},
   },
   book={
      series={Electron. Notes Discrete Math.},
      volume={38},
      publisher={Elsevier Sci. B. V., Amsterdam},
   },
   date={2011},
   pages={437--442},
   doi={10.1016/j.endm.2011.09.071}
}

\bib{Ha10}{article}{
   author={Hatami, Hamed},
   title={Graph norms and Sidorenko's conjecture},
   journal={Israel J. Math.},
   volume={175},
   date={2010},
   pages={125--150},
   issn={0021-2172},
   review={\MR{2607540}},
   doi={10.1007/s11856-010-0005-1}
}

\bib{HaTh89}{article}{
   author={Haviland, Julie},
   author={Thomason, Andrew},
   title={Pseudo-random hypergraphs},
   note={Graph theory and combinatorics (Cambridge, 1988)},
   journal={Discrete Math.},
   volume={75},
   date={1989},
   number={1-3},
   pages={255--278},
   issn={0012-365X},
   review={\MR{1001401}},
   doi={10.1016/0012-365X(89)90093-9},
}

\bib{He}{article}{
   author={He, Xiaoyu},
   title={Linear dependence between hereditary quasirandomness conditions},
   journal={Electron. J. Combin.},
   volume={25},
   date={2018},
   number={4},
   pages={Paper 4.12, 14 pages},
   issn={1077-8926},
   review={\MR{3874278}},
}
		
\bib{HuLe12}{article}{
   author={Huang, Hao},
   author={Lee, Choongbum},
   title={Quasi-randomness of graph balanced cut properties},
   journal={Random Structures Algorithms},
   volume={41},
   date={2012},
   number={1},
   pages={124--145},
   issn={1042-9832},
   review={\MR{2943429}},
   doi={10.1002/rsa.20384},
}

\bib{HKPP}{article}{
	author={Hubai, T.},
	author={Kr{\'a}l, D.},
	author={Parczyk, O.},
	author={Person, Y.},
	title={More non-bipartite forcing pairs},
	note={Submitted},
}

\bib{JaSo15}{article}{
   author={Janson, Svante},
   author={S\'os, Vera T.},
   title={More on quasi-random graphs, subgraph counts and graph limits},
   journal={European J. Combin.},
   volume={46},
   date={2015},
   pages={134--160},
   issn={0195-6698},
   review={\MR{3305351}},
   doi={10.1016/j.ejc.2015.01.001},
}

\bib{Korea}{article}{
   author={Kim, Jeong Han},
   author={Lee, Choongbum},
   author={Lee, Joonkyung},
   title={Two approaches to Sidorenko's conjecture},
   journal={Trans. Amer. Math. Soc.},
   volume={368},
   date={2016},
   number={7},
   pages={5057--5074},
   issn={0002-9947},
   review={\MR{3456171}},
   doi={10.1090/tran/6487},
}

\bib{KR03a}{article}{
   author={Kohayakawa, Y.},
   author={R\"odl, V.},
   title={Regular pairs in sparse random graphs. I},
   journal={Random Structures Algorithms},
   volume={22},
   date={2003},
   number={4},
   pages={359--434},
   issn={1042-9832},
   review={\MR{1980964}},
   doi={10.1002/rsa.10081},
}

\bib{KR03}{article}{
   author={Kohayakawa, Y.},
   author={R\"odl, V.},
   title={Szemer\'edi's regularity lemma and quasi-randomness},
   conference={
      title={Recent advances in algorithms and combinatorics},
   },
   book={
      series={CMS Books Math./Ouvrages Math. SMC},
      volume={11},
      publisher={Springer, New York},
   },
   date={2003},
   pages={289--351},
   review={\MR{1952989}},
   doi={10.1007/0-387-22444-0\_9},
}

\bib{KRS02}{article}{
   author={Kohayakawa, Yoshiharu},
   author={R\"odl, Vojt\v ech},
   author={Skokan, Jozef},
   title={Hypergraphs, quasi-randomness, and conditions for regularity},
   journal={J. Combin. Theory Ser. A},
   volume={97},
   date={2002},
   number={2},
   pages={307--352},
   issn={0097-3165},
   review={\MR{1883869}},
   doi={10.1006/jcta.2001.3217},
}

\bib{KS06}{article}{
   author={Krivelevich, M.},
   author={Sudakov, B.},
   title={Pseudo-random graphs},
   conference={
      title={More sets, graphs and numbers},
   },
   book={
      series={Bolyai Soc. Math. Stud.},
      volume={15},
      publisher={Springer, Berlin},
   },
   date={2006},
   pages={199--262},
   review={\MR{2223394}},
   doi={10.1007/978-3-540-32439-3\_10},
}

\bib{LM15}{article}{
   author={Lenz, John},
   author={Mubayi, Dhruv},
   title={Eigenvalues and linear quasirandom hypergraphs},
   journal={Forum Math. Sigma},
   volume={3},
   date={2015},
   pages={e2, 26 pages},
   issn={2050-5094},
   review={\MR{3324939}},
   doi={10.1017/fms.2014.22},
}
	
\bib{LSz}{article}{
	author={Li, J. X.},
	author={Szegedy, B.},
	title={On the logarithmic calculus and {S}idorenko's conjecture},
	journal={Combinatorica},
	note={To appear},
	eprint={1107.1153},
}

\bib{Lov12}{book}{
   author={Lov{\'a}sz, L{\'a}szl{\'o}},
   title={Large networks and graph limits},
   series={American Mathematical Society Colloquium Publications},
   volume={60},
   publisher={American Mathematical Society, Providence, RI},
   date={2012},
   pages={xiv+475},
   isbn={978-0-8218-9085-1},
   review={\MR{3012035}},
   doi={10.1090/coll/060},
}
	
\bib{LoSz}{article}{
   author={Lov\'asz, L\'aszl\'o},
   author={Szegedy, Bal\'azs},
   title={Szemer\'edi's lemma for the analyst},
   journal={Geom. Funct. Anal.},
   volume={17},
   date={2007},
   number={1},
   pages={252--270},
   issn={1016-443X},
   review={\MR{2306658}},
   doi={10.1007/s00039-007-0599-6},
}

\bib{RSch}{article}{
	author={Reiher, Chr.},
	author={Schacht, M.},
	title={Quasirandomness from hereditary subgraph densities},
	note={Manuscript},
}

\bib{Ro86}{article}{
   author={R\"odl, Vojt\v ech},
   title={On universality of graphs with uniformly distributed edges},
   journal={Discrete Math.},
   volume={59},
   date={1986},
   number={1-2},
   pages={125--134},
   issn={0012-365X},
   review={\MR{837962}},
   doi={10.1016/0012-365X(86)90076-2},
}

\bib{Sh08}{article}{
   author={Shapira, Asaf},
   title={Quasi-randomness and the distribution of copies of a fixed graph},
   journal={Combinatorica},
   volume={28},
   date={2008},
   number={6},
   pages={735--745},
   issn={0209-9683},
   review={\MR{2488748}},
   doi={10.1007/s00493-008-2375-0},
}

\bib{SY10}{article}{
   author={Shapira, Asaf},
   author={Yuster, Raphael},
   title={The effect of induced subgraphs on quasi-randomness},
   journal={Random Structures Algorithms},
   volume={36},
   date={2010},
   number={1},
   pages={90--109},
   issn={1042-9832},
   review={\MR{2591049}},
   doi={10.1002/rsa.20289},
}

\bib{SY12}{article}{
   author={Shapira, Asaf},
   author={Yuster, Raphael},
   title={The quasi-randomness of hypergraph cut properties},
   journal={Random Structures Algorithms},
   volume={40},
   date={2012},
   number={1},
   pages={105--131},
   issn={1042-9832},
   review={\MR{2864654}},
   doi={10.1002/rsa.20364},
}

\bib{Si92}{article}{
   author={Sidorenko, A. F.},
   title={Inequalities for functionals generated by bipartite graphs},
   language={Russian},
   journal={Diskret. Mat.},
   volume={3},
   date={1991},
   number={3},
   pages={50--65},
   issn={0234-0860},
   translation={
      journal={Discrete Math. Appl.},
      volume={2},
      date={1992},
      number={5},
      pages={489--504},
      issn={0924-9265},
   },
   review={\MR{1138091}},
}

\bib{Si84}{article}{
   author={Simonovits, Mikl\'os},
   title={Extremal graph problems, degenerate extremal problems, and
   supersaturated graphs},
   conference={
      title={Progress in graph theory},
      address={Waterloo, Ont.},
      date={1982},
   },
   book={
      publisher={Academic Press, Toronto, ON},
   },
   date={1984},
   pages={419--437},
   review={\MR{776819}},
}

\bib{SiSo91}{article}{
   author={Simonovits, Mikl\'os},
   author={S\'os, Vera T.},
   title={Szemer\'edi's partition and quasirandomness},
   journal={Random Structures Algorithms},
   volume={2},
   date={1991},
   number={1},
   pages={1--10},
   issn={1042-9832},
   review={\MR{1099576}},
   doi={10.1002/rsa.3240020102},
}

\bib{SiSo97}{article}{
   author={Simonovits, Mikl\'os},
   author={S\'os, Vera T.},
   title={Hereditarily extended properties, quasi-random graphs and not
   necessarily induced subgraphs},
   journal={Combinatorica},
   volume={17},
   date={1997},
   number={4},
   pages={577--596},
   issn={0209-9683},
   review={\MR{1645698}},
   doi={10.1007/BF01195005},
}

\bib{SiSo03}{article}{
   author={Simonovits, Mikl\'os},
   author={S\'os, Vera T.},
   title={Hereditary extended properties, quasi-random graphs and induced
   subgraphs},
   note={Combinatorics, probability and computing (Oberwolfach, 2001)},
   journal={Combin. Probab. Comput.},
   volume={12},
   date={2003},
   number={3},
   pages={319--344},
   issn={0963-5483},
   review={\MR{1988980}},
   doi={10.1017/S0963548303005613},
}

\bib{SkTh04}{article}{
   author={Skokan, Jozef},
   author={Thoma, Lubos},
   title={Bipartite subgraphs and quasi-randomness},
   journal={Graphs Combin.},
   volume={20},
   date={2004},
   number={2},
   pages={255--262},
   issn={0911-0119},
   review={\MR{2080111 (2005e:05130)}},
   doi={10.1007/s00373-004-0556-1},
}

\bib{Szegedy}{article}{
	author={Szegedy, Bal\'azs}, 
	title={An information theoretic approach to Sidorenko's conjecture}, 
	eprint={1406.6738},
	note={Preprint},
}
	
\bib{Sz78}{article}{
   author={Szemer{\'e}di, Endre},
   title={Regular partitions of graphs},
   language={English, with French summary},
   conference={
      title={Probl\`emes combinatoires et th\'eorie des graphes},
      address={Colloq. Internat. CNRS, Univ. Orsay, Orsay},
      date={1976},
   },
   book={
      series={Colloq. Internat. CNRS},
      volume={260},
      publisher={CNRS, Paris},
   },
   date={1978},
   pages={399--401},
   review={\MR{540024 (81i:05095)}},
}	

\bib{Th87a}{article}{
   author={Thomason, Andrew},
   title={Pseudorandom graphs},
   conference={
      title={Random graphs '85},
      address={Pozna\'n},
      date={1985},
   },
   book={
      series={North-Holland Math. Stud.},
      volume={144},
      publisher={North-Holland, Amsterdam},
   },
   date={1987},
   pages={307--331},
   review={\MR{930498}},
}

\bib{Th87b}{article}{
   author={Thomason, Andrew},
   title={Random graphs, strongly regular graphs and pseudorandom graphs},
   conference={
      title={Surveys in combinatorics 1987},
      address={New Cross},
      date={1987},
   },
   book={
      series={London Math. Soc. Lecture Note Ser.},
      volume={123},
      publisher={Cambridge Univ. Press, Cambridge},
   },
   date={1987},
   pages={173--195},
   review={\MR{905280}},
}

\bib{To17}{article}{
   author={Towsner, Henry},
   title={$\sigma$-algebras for quasirandom hypergraphs},
   journal={Random Structures Algorithms},
   volume={50},
   date={2017},
   number={1},
   pages={114--139},
   issn={1042-9832},
   review={\MR{3583029}},
   doi={10.1002/rsa.20641},
}

\bib{Wi72}{article}{
   author={Wilson, Richard M.},
   title={Cyclotomy and difference families in elementary abelian groups},
   journal={J. Number Theory},
   volume={4},
   date={1972},
   pages={17--47},
   issn={0022-314X},
   review={\MR{0309755}},
   doi={10.1016/0022-314X(72)90009-1},
}

\bib{Y10}{article}{
   author={Yuster, Raphael},
   title={Quasi-randomness is determined by the distribution of copies of a
   fixed graph in equicardinal large sets},
   journal={Combinatorica},
   volume={30},
   date={2010},
   number={2},
   pages={239--246},
   issn={0209-9683},
   review={\MR{2676838}},
   doi={10.1007/s00493-010-2496-0},
}

\end{biblist}
\end{bibdiv}

\end{document}